\newtheorem{theorem}{Theorem}[section]
\newtheorem{proposition}[theorem]{Proposition}
\newtheorem{lemma}[theorem]{Lemma}
\newtheorem{corollary}[theorem]{Corollary}
\theoremstyle{remark} 
\begin{document}

\markboth{I Biswas, S Chatterjee, R Dey}
{Geometric prequantization on
the path space of a prequantized manifold}

%%%%%%%%%%%%%%%%%%%%% Publisher's Area please ignore %%%%%%%%%%%%%%%
%
%\catchline{}{}{}{}{}
%
%%%%%%%%%%%%%%%%%%%%%%%%%%%%%%%%%%%%%%%%%%%%%%%%%%%%%%%%%%%%%%%%%%%%

\title{Geometric prequantization on
the path space of a prequantized manifold}

\author{INDRANIL BISWAS}

\address{School of Mathematics, Tata Institute of Fundamental
Research,\\ Homi Bhabha Road, Mumbai 400005, India\\
indranil@math.tifr.res.in}

\author{SAIKAT CHATTERJEE}

\address{Department of Mathematics,  Institut des Hautes \'Etudes Scientifiques,\\ 35 Route de Chartres, Bures-sur-Yvette 91440, France\\saikat.chat01@gmail.com}

\author{RUKMINI DEY}

\address{School of Mathematics, Harish Chandra Research Institute,\\ Jhusi,
Allahabad 211019, India\\
rukmini.dey@gmail.com }

\maketitle

\begin{abstract}
Given a compact symplectic manifold $M$, with integral symplectic form,
we prequantize a certain class of functions on the path space for $M$.
The functions in question are induced by functions on $M$. We apply our construction 
to study the symplectic structure on the solution space of Klein-Gordon equation.
\end{abstract}

\keywords{Symplectic manifold, path space, quantization, Klein-Gordon
equation \footnote {Mathsubjclass [2010] (53D50, 53D05, 53C80)}}

\section{Introduction}\label{s:int}

Let $M$ be a compact symplectic manifold with a symplectic form
$\omega$ such that the cohomology class of $\omega$ lies in the
image of $H^2(M,\, 2\pi\hbar\cdot {\mathbb Z})$ in
$H^2(M,\,{\mathbb R})$. Then one knows that there is a prequantum line bundle $L$ with a 
Hermitian metric, and a connection whose curvature is $\hbar^{-1}\cdot\omega$.
{\textit {Geometric prequantization}}  is a construction  of this 
prequantum line bundle $L$ equipped with a Hermitian connection. 
The Hilbert space of the prequantization is
the space of square integrable sections of $L$. Here in general we
assune manifold $M$ to be compact, however  many of
our constructions are valid without that assumption.

Fix a Hermitian connection $\nabla$ on $L$ whose curvature is
$\hbar^{-1}\cdot\omega$. To every $f \in C^{\infty}( M)$, we associate an
operator ${f}_{\rm op} \,:=\, -\sqrt{-1}\hbar\nabla_{X_f} + f $
acting on the above Hilbert space, where
$\nabla_{X_f}$ is the covariant derivative along the Hamiltonian vector field $X_f$ corresponding to $f$. 
For $f_1, f_2 \in C^{\infty} (M)$, let 
$f_3\, :=\, \{ f_1, f_2 \}$ be the corresponding Poisson bracket
induced by the symplectic form $\omega$. Then $[{f}_{1\, {\rm op}}, {f}_{2\,{\rm op}}]\,=\,
 -{\sqrt{-1}} \hbar {f}_{3\, {\rm op}}$ ~\cite{Wo}. This is the most important
of the quantization conditions.

Let ${\mathcal P} {M}$ denote the path space for $M$; more precisely, $\mathcal P {M}$ is the
space of all smooth  maps $\gamma:[0, 1]\longrightarrow M$. 
Any $f \in C^{\infty}( M)$ defines a $C^{\infty}$ function on $\mathcal P {M}$ by
\begin{equation}\label{intro:eq}
{\widetilde f}(\gamma):=\int_0^1f(\gamma(t))dt.
\end{equation}
Given a symplectic form $\omega$ on $M$, in \eqref{sympath} we construct a 
weakly symplectic form $\widetilde \omega$ on $\mathcal P {M}$. 
We observe here that for the construction of the symplectic 
form $\widetilde \omega$, the compactness condition is not necessary. Since, the symplectic form
is weak, an arbitrary $C^{\infty}$ function on $\mathcal P {M}$ may not have a corresponding 
Hamiltonian vector field. For that reason we consider the space of 
functions (on path space) to be generated by finite polynomials of functions
of the type in \eqref{intro:eq}; that is functions on $\mathcal P {M}$ which can be written as
\begin{equation} \nonumber
\sum_i c_i \cdot (\widetilde{f_i})^{n_i} .
\end {equation}
Corollary~\ref{c:Hamilentire} describes the Hamiltonian vector fields on the space of 
functions defined above. We refer to \cite{mars, bam, mc} for a detail discussion about
Hamiltonian vector fields on weakly symplectic spaces. In this paper, we further 
exhibit a Poisson structure on that class of functions on $\mathcal P {M}$. The relationship between
the Poisson structure on $M$ and the Poisson structure defined by the symplectic form 
$\widetilde \omega$ on $\mathcal P {M}$ is investigated.

We show that the symplectic form $\widetilde \omega$ can be expressed as the sum of
the pull-back of the symplectic form $\omega$ and the exterior derivative
of a certain globally defined $1$-form $\lambda$ on $\mathcal P {M}$. Consequently, we obtain a
Hermitian connection $\widetilde \nabla$, given in \eqref{conpullback},
 on the prequantum bundle $ {\rm ev}_0^* L\,\longrightarrow\,
\mathcal P {M}$ whose curvature is  $\hbar^{-1}\widetilde \omega$. We emphasize here that while the line 
bundle over $\mathcal P {M}$ is constructed by pulling back the line bundle $L$ on $M$
via ${\rm ev}_0$, the connection $\widetilde \nabla$ is not simply the pull-back
of connection $\nabla$. The construction of the line bundle on the loop space and 
its relation to geometric quantization has been discussed in \cite{bryl, gawd} and others. 
In \cite{bryl} the line bundle on the loop space ${\mathcal L}M$ has been constructed exploiting the 
connection structure on the sheaf of groupoids
over the manifold $M$ and the curvature of that line bundle is obtained
by  Chen integral of a``curvature $3$-form"  on $M$ [Proposition 6.2.2, \cite{bryl}]. 
Whereas in our construction the curvature of the line bundle over $\mathcal P {M}$ is given by an 
``integration" of the symplectic $2$-form $\omega$ on $M$. However, we will see in 
Proposition~\ref{p:tildenabla} that the $1$-form $\lambda$ in our path space connection 
$\widetilde \nabla$ is obtained by Chen integral of the $2$-form $\omega$ on $M$.

There exists a certain special class of sections of the line bundle 
${\rm ev}_0^* L$ given by the 
pull-backs of square integrable sections  of the prequantize line bundle $L$,
which defines a Hilbert space. Let $\phi$ be a function on $\mathcal P {M}$ induced by a functions on $M$
We can associate to it an operator ${\phi}_{\rm op}$ as follows:
$$
{\phi}_{\rm op}\,:=\, -\sqrt{-1}\hbar{\widetilde \nabla}_{{\widetilde X}_{\phi}}+
{\phi}\, ,
$$
where ${\widetilde \nabla}$ is the above mentioned connection on the line bundle
${\rm ev}_0^* L$ over ${\mathcal P}M$. These operators satisfy the prequantization 
condition, because the curvature of the connection is $\hbar^{-1}\widetilde \omega$. Thus we have 
prequantized a certain class of functions on the path space of $M$.

The final section is devoted to an application of our construction. Let $M$ be the
space-time manifold with a pseudo-Riemannian metric, and let
${\overline M}\,:=\, M\times S^1$ be the manifold 
with an additional spatial compact dimension $S^1.$ Introduction of additional compact manifolds, popularly known as ``compactification" in physics,
 plays a pivotal role in Kaluza-Klein theories and string theory~\cite{G-S-W, Gr}. The spaces of fields on manifolds, which are solutions 
of certain physical theories, such as (real or complex) scalar fields, Yang-Mills fields, electromagnetic fields, have
symplectic structures~\cite{Wo}.

Though here we exclusively focus on real scalar fields given by the solutions of 
Klein-Gordon equation, our construction can be generalized for other theories as well. Let $\mathcal M$ be the space of (real) scalar fields, satisfying 
Klein-Gordon equation, on $M$, and let ${\overline {\mathcal M}}$ be likewise for $\overline M.$ 
The space  $\mathcal M$ or ${\overline {\mathcal M}}$ is obviously not compact. However, since in order
to construct the symplectic form on path space, we do not need to assume the compactness. The following
goes through. We first construct the path space ${\mathcal P}{\mathcal M}$ over $\mathcal M$ and following our 
constructions in preceding sections we define a symplectic form on  ${\mathcal P}{\mathcal M}$. 
We show that $\overline {\mathcal M}$ can be identified with a subspace of ${\mathcal P}{\mathcal M}.$ 
Proposition~\ref{p:coinsym} proves that the symplectic form on  $\overline {\mathcal M}$, given by the 
symplectic structure as solution space of Klein-Gordon equation, coincides with the symplectic form 
defined by the symplectic structure on the path space of $\mathcal M$.

\section{Poisson structure on Path space}\label{s:poiss}

Let $I\,:=\,[0\,,1]\,\subset\, {\mathbb R}$ be the closed interval, and let
$M$ be a smooth real manifold of finite dimension. 
The \textit{path space} of $M$, which is denoted by ${\mathcal P} {M}$,
is the space of all smooth differentiable 
maps $$\gamma \,:\, I \,\longrightarrow\, M\, .$$ For any $\tau\, \in\,
[0\,,1]$, the restriction of a path $\gamma$ to
the sub-interval $[0\, , \tau]\,
\subset\, I$ will be denoted by $\gamma_{\tau}$. This
$\gamma_{\tau}$ is called a \textit{segment} of $\gamma$.
The path space $\mathcal P {M}$ has a differentiable structure~\cite{chen, chen1}. 
The tangent space to $\mathcal P {M}$ at $\gamma$ is the
space of all smooth vector fields $t\,\longmapsto\, T_{\gamma(t)}M$,
$t\,\in\, I$.

Let 
\begin{equation}\label{evm}
{\rm ev}\,:\,\mathcal P {M} \times [0\, , 1]\,\longrightarrow\, M\, , ~\
(\gamma\, , t)\,\longmapsto\, \gamma(t)\, ,
\end{equation}
be the evaluation map. For any $t\, \in\, I$, define
$$
{\rm ev}_t\,:\, \mathcal P {M} \, \longrightarrow\, M\, , ~\
\gamma\,\longmapsto\, {\rm ev}(\gamma,t)\,=\,\gamma(t)\, .
$$
For any $C^{\infty}$ function $F$ on $\mathcal P {M} \times [0, 1]$, define a function
${\overline F}$ on $\mathcal P {M}$ by
$$
\overline{F} (\gamma) :\,= \, \int_{0}^{1} F (\gamma, t) dt\, .
$$
Now, given any function
$f \,\in\, C^{\infty} \bigl(M\bigr)$,  we have the function
${\widehat f}\,:=\,{\rm ev}^{*}f$ on ${\mathcal P}  M \times [0\, ,1]$
defined by 
$$
\widehat{f}(\gamma, t) \,:=\,  f (\gamma(t))\, .
$$
Let
\begin{equation} \label{funcpm}
\widetilde{f} (\gamma) :\,= \, \overline{\widehat{f}}\,=\, 
\int_{0}^{1} \widehat{f} (\gamma, t) dt \,=\, \int_{0}^{1} f (\gamma(t)) dt\, .
\end{equation} 
be the function on ${\mathcal P}  M$.

Note that the map $f\, \longmapsto\, \widetilde{f}$ is injective.
Indeed, if $\widetilde{f}\,=\, \widetilde{g}$, then for any $x\, \in\, M$,
taking $\gamma$ to be the constant map $I\, \longrightarrow\, x$, from
\eqref{funcpm} we conclude that $f(x)\,=\, g(x)$. Let 
\begin{equation}\label{om10}
\Omega_{1}^{0} {\mathcal P}{M}
\end{equation}
denote the space of all functions on $\mathcal P {M}$ defined by the 
$C^{\infty}$ functions on $M$ using \eqref{funcpm}.

Let $\Omega^{0} {\mathcal P}{M}$ denote the space of all
$C^\infty$ functions on ${\mathcal P}{M}$ that can be written as
a finite sum of the form 
\begin{equation} \label{uptok}
\sum_i c_i \cdot (\widetilde{f_i})^{n_i}\, ,
\end{equation}
where $c_i\,\in\, \mathbb R$, $f_i\, \in\, C^\infty(M)$ and $n_i$ are
nonnegative integers. Note, if $f\in C^{\infty}(M)$, then the pull-back function
${\rm ev}_{t_0}^{*}f$ on $\mathcal P {M},$ for any $t_0\in [0,1]=I$ can be written as
$${\rm ev}_{t_0}^{*}f(\gamma)=\int_0^1 f(\gamma(t))\delta(t-t_0)dt,$$
where $\delta(t-t_0)$ is the Dirac Delta function. Thus the space of functions
 on path space $\mathcal P {M}$ obtained by
pull-back of the functions on $M$ is a subset of  $\Omega^{0} {\mathcal P}{M}$.

Take $f\, ,g\, \in\, C^\infty(M)$. Observe that we 
have the following identity for the product of two line integrals:
\begin{equation}\label{productline}
\left(\int_{0}^{1} f (\gamma(t)) dt\right)\left(\int_{0}^{1} g (\gamma(t)) 
dt\right)\,=
\end{equation}
$$
\int_{0}^{1}f(\gamma(t))\left(\int_{0}^{t} g(\gamma(\tau))d\tau\right)dt +
\int_{0}^{1}\left(\int_{0}^{t} f(\gamma(\tau))d\tau\right)g(\gamma(t))dt\, .
$$
Now, we construct a function 
${\widehat {f^{\bullet}}}$ on $\mathcal P {M}\times [0,1]$ as follows:
$$
{\widehat {f^{\bullet}}}(\gamma, t)\,:=\,\int_{0}^{t}f(\gamma(\tau))d\tau
$$
for all $t\in [0,1]$ and $\gamma\in \mathcal P {M}$.
Then the right--hand side of the identity \eqref{productline} is
$$
\left(\int_{0}^{1} f(\gamma(t)) dt\right)\left(\int_{0}^{1} g (\gamma(t)) dt\right)
$$
$$
=\,\int_{0}^{1}\bigl({\widehat f}\cdot {\widehat {g^{\bullet}}}\bigr)
(\gamma, t)dt +\int_{0}^{1}\bigl({\widehat {f^{\bullet}}}\cdot
{\widehat g}\bigr)(\gamma, t)dt\, .
$$
Therefore, we have
$$
({\widetilde f}\cdot {\widetilde g})(\gamma)
\,=\, \int_{0}^{1}({\widehat f}\cdot {\widehat {g^{\bullet}}})(\gamma, t)
dt +\int_{0}^{1}({\widehat {f^{\bullet}}}\cdot {\widehat g})
(\gamma, t)dt\, ,
$$
where ${\widetilde f}$ and ${\widetilde g}$ are constructed as in \eqref{funcpm}.
This defines a product law on $\Omega^0{\mathcal P {M}}$. Also if ${\widetilde f}$ is given by 
$${\widetilde f}(\gamma)\,=\,\int_{0}^{1} f (\gamma(t)) dt\, ,$$
then the de Rham differential of ${\widetilde f}$ is given by following formula~\cite{BC, chen}:
\begin{equation} \label{exti}
{{\rm d}\widetilde f (v)}(\gamma)\,=\,\int_{0}^{1} df ({d {\rm ev}_t}v(\gamma)) dt\, ,
\end{equation}
where $v\,\in \,T\mathcal P {M}$ and $d {\rm ev}_t$ be the differential of the map ${\rm ev}_t$. By
Leibniz rule, \eqref{exti} extends to $\Omega^{0} {\mathcal P}{M}.$

Suppose $M$ be a compact connected symplectic manifold equipped
with a symplectic form $\omega$. Given a $C^{\infty}$ function $f$ on $M$,
let $X_f$ be the \textit{Hamiltonian vector field} on $M$ defined by
\begin{equation}\label{hamilbasesym}
i_{X_{f}}\omega\,=\,-df\, ,
\end{equation}
where $i$ is the the contraction of differential forms by vector fields, and $d$ is
the de Rham differential. It should be clarified that for notational convenience 
we will often denote a contraction by writing 
the vector field as an argument of the differential form:
$$i_{v}\alpha (-)\,\equiv\, \alpha (v\, ,-)\, ,$$
where $\alpha$ is a differential form and $v$ is a tangent vector field on some
manifold. The $2$-form 
$\omega$ induces a Poisson structure on $M$.
The Poisson bracket of $f, g\in C^{\infty} \bigl(M\bigr)$ is defined to be
$$
\{f\, , g\}\,:=\, \omega (X_f\, , X_g)\, .
$$
Equivalently, the Poisson bracket can be written as
$$
\{f, g\}\,=\, i_{X_f}(dg)\, .
$$
When there are more than one symplectic manifolds, we will denote the Poisson
structure on $M$ by $\{, \}_M$ to avoid any confusion.

Let us define a symplectic form $\widetilde \omega$ on $\mathcal P {M}$ as follows
\begin{equation}\label{sympath}
\bigl(\widetilde \omega (v_1, v_2)\bigr)(\gamma)\,:=\,
\int_{0}^{1} \omega \bigl(d{\rm ev}_t (v_1 (\gamma)), 
d{\rm ev}_t (v_2 (\gamma))\bigr)dt\, .
\end{equation}
 
It is straightforward to verify that $\widetilde \omega$ is closed
and nondegenerate~\cite{BC}. However,  $\omega$ is symplectic only in a 
weak sense; that is the induced map 
\begin{eqnarray}
&{\widetilde \omega}^{\circ}:&T\mathcal P {M} \rightarrow T^{*}\mathcal P {M},\nonumber\\
&&v\mapsto \widetilde \omega (v,-)\nonumber
\end{eqnarray}
is only injective, not surjective. Thus as observed in Section~\ref{s:int},
 an arbitrary $C^{\infty}$ function on $\mathcal P {M}$ does not guarantee
a corresponding Hamiltonian vector field. In what follows we show
that for the class of functions defined in \eqref{uptok} Hamiltonian
vector fields do exist. Take any ${\phi}\,\in\, {{\Omega}^{0}}\mathcal P {M}$,
where ${{\Omega}^{0}}\mathcal P {M}$ is 
defined in \eqref{uptok}, and let $\widetilde{X}_{\phi}\,\in\, T \mathcal P {M}$
be the Hamiltonian vector 
field corresponding to ${\phi}$; if it exists then, it is uniquely 
determined by the equation
\begin{equation}\label{hamilpm}
i_{{\widetilde X}_{\phi}}{\widetilde \omega}\,=\, -{\rm d}{\phi}\, .
\end{equation}
We define the Poisson bracket on the path space $\mathcal P {M}$ as follows:
\begin{equation}\label{pathpoiss}
\{{\phi}_1, {\phi}_2 \}_{\mathcal P {M}}\,:=\,
{\widetilde \omega}({\widetilde X}_{\phi_1}, {\widetilde X}_{\phi_2})\, ,
\end{equation}
where ${\phi}_1, {\phi}_2\in {{\Omega}^{0}}\mathcal P {M}$ and 
${\widetilde X}_{{\phi}_1}, {\widetilde X}_{{\phi}_2}\in T\mathcal P {M}$ are their respective Hamiltonian vector fields.

Now, suppose ${\widetilde f}\in {{\Omega}^{0}_{\rm 1}}\mathcal P {M}\,\subset\, {{\Omega}^{0}}\mathcal P {M}$, 
where ${{\Omega}^{0}_{\rm 1}}\mathcal P {M}$ is defined in \eqref{om10}. Hence ${\widetilde f}$ is given by
$${\widetilde f}(\gamma)\,=\,\int_{0}^{1} f (\gamma(t)) dt$$ 
for some $f\in C^{\infty}M.$ Observe by \eqref{exti} we have
\begin{equation}\label{hamilpmexpl}
\begin{split}
&i_{{\widetilde X}_{\widetilde f}}{\widetilde \omega}\,=\, -{\rm d}{\widetilde f}\, \\
&\Rightarrow~ {\widetilde \omega}\bigl({{\widetilde X}_{\widetilde f}}\, ,
- \bigr)(\gamma)=-\int_0^{1}(d{ f}(-))_{\gamma(t)}dt\, ,
\end{split}
\end{equation}
where ${{\widetilde X}_{\widetilde f}}\in T \mathcal P {M}$ is the Hamiltonian vector field corresponding to ${\widetilde f}.$
Using the definition of $\widetilde \omega$ in \eqref{sympath}, the above equation reads
$$
\int_{0}^{1} \omega \bigl(d{\rm ev}_t ({{\widetilde X}_{\widetilde f}} (\gamma)), 
- \bigr)dt\,=\,-\int_0^{1}(d{ f}(-))_{\gamma(t)}dt
$$
for all $\gamma\,\in \,\mathcal P {M}$.
 
\begin{proposition}\label{p:identify}
Suppose ${\widetilde f}\in {{\Omega}^{0}_{\rm 1}}\mathcal P {M}\,\subset\, {{\Omega}^{0}}\mathcal P {M}$ is given by
$${\widetilde f}(\gamma)\,=\,\int_{0}^{1} f (\gamma(t)) dt
$$
where $f\in C^{\infty}M$.
Then the Hamiltonian vector field ${{\widetilde X}_{\widetilde f}}\in T \mathcal P {M}$ corresponding to ${\widetilde f}$, given by \eqref{hamilpmexpl},
satisfies 
$$
\bigl({{\widetilde X}_{\widetilde f}}(\gamma)\bigr)(t)\,=\,X_f(\gamma(t))\, ,
$$
where $X_f \in TM$ is the Hamiltonian vector field
defined in \eqref{hamilbasesym} corresponding to $f\,\in\, C^{\infty}(M)$.
\end{proposition}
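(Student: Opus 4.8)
The plan is to verify that an explicit candidate satisfies the defining equation \eqref{hamilpmexpl} and then to invoke uniqueness coming from weak nondegeneracy. First I would define the vector field $Y\in T\mathcal P M$ by setting $(Y(\gamma))(t):=X_f(\gamma(t))$. Since $X_f$ is a smooth vector field on $M$ and $\gamma$ is smooth, $Y(\gamma)$ is a smooth vector field along $\gamma$, hence a genuine tangent vector to $\mathcal P M$ at $\gamma$. It then suffices to show that $i_Y{\widetilde\omega}=-{\rm d}{\widetilde f}$.

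To this end I would contract $\widetilde\omega$ with $Y$ against an arbitrary $v\in T\mathcal P M$. The elementary point is that $d{\rm ev}_t$ sends a vector field along $\gamma$ to its value at the parameter $t$, so that $d{\rm ev}_t(Y(\gamma))=X_f(\gamma(t))$. Substituting this into the definition \eqref{sympath} of $\widetilde\omega$ gives
$$
\bigl({\widetilde\omega}(Y,v)\bigr)(\gamma)=\int_0^1\omega\bigl(X_f(\gamma(t)),\,d{\rm ev}_t(v(\gamma))\bigr)\,dt\, .
$$
Applying the defining relation \eqref{hamilbasesym}, namely $i_{X_f}\omega=-df$ pointwise on $M$, the integrand becomes $-(df)\bigl(d{\rm ev}_t(v(\gamma))\bigr)$ evaluated at $\gamma(t)$, whence
$$
\bigl({\widetilde\omega}(Y,v)\bigr)(\gamma)=-\int_0^1 (df)\bigl(d{\rm ev}_t\,v(\gamma)\bigr)_{\gamma(t)}\,dt\, .
$$
Comparing with the Chen-type formula \eqref{exti} for the de Rham differential, the right-hand side is exactly $-\bigl({\rm d}{\widetilde f}(v)\bigr)(\gamma)$. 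As $v$ is arbitrary, this establishes $i_Y{\widetilde\omega}=-{\rm d}{\widetilde f}$, which is precisely \eqref{hamilpmexpl}.

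Finally I would conclude by uniqueness. Although $\widetilde\omega$ is only weakly symplectic, the induced map ${\widetilde\omega}^{\circ}$ is injective, so any vector field solving \eqref{hamilpm} is unique whenever it exists. Having exhibited the explicit solution $Y$, I conclude both that ${\widetilde X}_{\widetilde f}$ exists and that ${\widetilde X}_{\widetilde f}=Y$, that is, $\bigl({\widetilde X}_{\widetilde f}(\gamma)\bigr)(t)=X_f(\gamma(t))$. I do not expect a genuine obstacle here: the argument is essentially a matching of integrands, and the only points requiring care are the interpretation of $d{\rm ev}_t$ as evaluation at $t$ and the appeal to injectivity of ${\widetilde\omega}^{\circ}$, which is what upgrades the verification into an identification of the Hamiltonian vector field.
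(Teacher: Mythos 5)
Your argument is correct and is essentially the paper's own proof: both exhibit the candidate field $\gamma\mapsto\bigl(t\mapsto X_f(\gamma(t))\bigr)$, verify $i_Y\widetilde\omega=-{\rm d}\widetilde f$ by matching integrands via \eqref{sympath}, \eqref{hamilbasesym} and \eqref{exti}, and conclude by uniqueness from the injectivity of $\widetilde\omega^{\circ}$. No meaningful difference in route.
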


\begin{proof}
Suppose $X_f \in TM$ is the Hamiltonian vector field corresponding to $f\in C^{\infty}M.$ We define a vector field  $\chi \in T\mathcal P {M}$ as follows:
$$
\bigl(\chi(\gamma)\bigr)(t)\,:=\,X_f(\gamma(t))\, .
$$
We claim that $\chi$ is the Hamiltonian vector field corresponding to ${\widetilde f}.$ As Hamiltonian vector field is unique, 
to prove the claim, it is enough to show the $\chi$ satisfies 
$$\widetilde \omega (\chi, v)\,=\,-{\rm d}{\widetilde f}(v)\, .$$
Note that by construction $$d{\rm ev}_t (\chi (\gamma))\,=\,X_f(\gamma(t))\, .$$
Hence, using the relation $\omega(X_f, v)=-df(v)$ and the definition of $\widetilde \omega$ in \eqref{sympath} we obtain
\begin{equation}\nonumber
\begin{split}
&\int_{0}^{1} \omega \bigl(d{\rm ev}_t ({\chi} (\gamma)), - \bigr)dt\\
=&\int_{0}^{1} \omega \bigl(X_f(\gamma(t)), - \bigr)dt\\
=&-\int_0^{1}(d{ f}(-))_{\gamma(t)}dt,
\end{split}
\end{equation}
for all $\gamma\,\in\, \mathcal P {M}$. Thus,
$$\widetilde \omega (\chi, v)=-{\rm d}{\widetilde f}(v)$$
proving the claim. The proposition follows from the claim.
\end{proof}

\begin{corollary}\label{c:identifypull}
If ${\widetilde f}, {\widetilde g} \in {{\Omega}^{0}_{\rm 1}}\mathcal P {M}\,\subset\,
 {{\Omega}^{0}}\mathcal P {M}$ are given by
\begin{eqnarray}
&&{\widetilde f}(\gamma)=\int_{0}^{1} f (\gamma(t)) dt, {\hskip 0.3 cm} {\rm where} {\hskip 0.2 cm} f\in C^{\infty}M,\nonumber\\
&&{\widetilde g}(\gamma)=\int_{0}^{1} g (\gamma(t)) dt, {\hskip 0.3 cm} {\rm where} {\hskip 0.2 cm} g\in C^{\infty}M,\nonumber
\end{eqnarray}
then 
$$
\{{\widetilde f}, {\widetilde g}\}_{\mathcal P {M}}(\gamma)\,=\,\int_0^{1}\{{f}, {g}\}(\gamma(t))dt\, .
$$
\end{corollary}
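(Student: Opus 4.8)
The plan is to compute the path-space Poisson bracket directly from its definition \eqref{pathpoiss} by inserting the explicit description of the Hamiltonian vector fields furnished by Proposition~\ref{p:identify}. Since both $\widetilde{f}$ and $\widetilde{g}$ lie in ${\Omega}^{0}_{1}\mathcal{P}M \subset {\Omega}^{0}\mathcal{P}M$, Proposition~\ref{p:identify} guarantees that their Hamiltonian vector fields ${\widetilde X}_{\widetilde f}$ and ${\widetilde X}_{\widetilde g}$ exist and are given pointwise by $\bigl({\widetilde X}_{\widetilde f}(\gamma)\bigr)(t) = X_f(\gamma(t))$ and $\bigl({\widetilde X}_{\widetilde g}(\gamma)\bigr)(t) = X_g(\gamma(t))$, where $X_f$ and $X_g$ are the Hamiltonian vector fields on $M$ associated to $f$ and $g$ via \eqref{hamilbasesym}.

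First I would write, using the definition \eqref{pathpoiss},
$$\{{\widetilde f}, {\widetilde g}\}_{\mathcal{P}M}(\gamma) = {\widetilde \omega}\bigl({\widetilde X}_{\widetilde f}, {\widetilde X}_{\widetilde g}\bigr)(\gamma)\, ,$$
and then substitute the definition \eqref{sympath} of ${\widetilde \omega}$ to obtain
$$\{{\widetilde f}, {\widetilde g}\}_{\mathcal{P}M}(\gamma) = \int_0^1 \omega\bigl(d{\rm ev}_t({\widetilde X}_{\widetilde f}(\gamma)), \, d{\rm ev}_t({\widetilde X}_{\widetilde g}(\gamma))\bigr)\, dt\, .$$
The next step is to recall, exactly as noted in the proof of Proposition~\ref{p:identify}, that $d{\rm ev}_t({\widetilde X}_{\widetilde f}(\gamma)) = X_f(\gamma(t))$ and $d{\rm ev}_t({\widetilde X}_{\widetilde g}(\gamma)) = X_g(\gamma(t))$, so that the integrand collapses to $\omega\bigl(X_f(\gamma(t)), X_g(\gamma(t))\bigr)$.

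Finally, I would recognize this integrand as the pointwise value of the Poisson bracket on $M$: by definition $\{f, g\} = \omega(X_f, X_g)$, hence $\omega\bigl(X_f(\gamma(t)), X_g(\gamma(t))\bigr) = \{f, g\}(\gamma(t))$, and integrating in $t$ yields the claimed identity. There is essentially no obstacle here, because all of the genuine content has already been extracted in Proposition~\ref{p:identify}: the existence of the Hamiltonian vector fields on the weakly symplectic space $\mathcal{P}M$ and their fiberwise identification with the base vector fields under $d{\rm ev}_t$. The only point requiring a moment of care is to observe that $d{\rm ev}_t$ intertwines ${\widetilde \omega}$ with $\omega$ pointwise in $t$, which is precisely the structure built into \eqref{sympath}; granting this, the corollary reduces to a direct substitution.
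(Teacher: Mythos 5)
Your proposal is correct and is exactly the argument the paper intends: the paper's proof is the one-line remark that the corollary follows directly from Proposition~\ref{p:identify}, and your computation merely writes out that substitution into \eqref{pathpoiss} and \eqref{sympath} explicitly. Nothing further is needed.
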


\begin{proof}
This follows directly from Proposition~\ref{p:identify}.
\end{proof}

\begin{corollary}\label{c:Hamilentire}
Hamiltonian vector fields, with respect to $\widetilde \omega$, on ${{\Omega}^{0}}\mathcal P {M}$
are completely determined by the Hamiltonian vector fields,
with respect to $\omega$, on $C^{\infty}(M)$. More precisely, suppose
${\zeta}\,:=\,{\widetilde f}\widetilde{g}\,\in\,
{{\Omega}^{0}}\mathcal P {M}$, where ${\widetilde f}, {\widetilde g} \in {{\Omega}^{0}_{\rm 1}}\mathcal P {M}
\,\subset\, {{\Omega}^{0}}\mathcal P {M}$ are given by
\begin{eqnarray}
&&{\widetilde f}(\gamma)=\int_{0}^{1} f (\gamma(t)) dt, {\hskip 0.3 cm} {\rm where} 
{\hskip 0.2 cm} f\in C^{\infty}M,\nonumber\\
&&{\widetilde g}(\gamma)=\int_{0}^{1} g (\gamma(t)) dt, {\hskip 0.3 cm} {\rm where} {\hskip 0.2 cm} g\in C^{\infty}M.\nonumber
\end{eqnarray}
Then the Hamiltonian vector field ${\widetilde X}_{\zeta}\in T\mathcal P {M}$ corresponding to
${\zeta}$ is given as follows: 
\begin{equation}\label{hamilsum}
\begin{split}
&\bigl({\widetilde X}_{\zeta}(\gamma)\bigr)(t)=\\ 
&\bigl(\int_{0}^{1} g (\gamma(t)) dt\bigr)\cdot X_f(\gamma(t))+
\bigl(\int_{0}^{1} f (\gamma(t)) dt\bigr)\cdot X_g(\gamma(t))\, .
\end{split}
\end{equation}
\end{corollary}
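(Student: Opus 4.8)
The plan is to exploit uniqueness of the Hamiltonian vector field together with the Leibniz rule, reducing the statement to a direct verification. Since the map $\widetilde\omega^\circ$ is injective, equation \eqref{hamilpm} has at most one solution, so it suffices to take the right-hand side of \eqref{hamilsum} as a candidate field and check that it satisfies $i_{\widetilde X_\zeta}\widetilde\omega = -{\rm d}\zeta$. First I would compute ${\rm d}\zeta$: since $\zeta = \widetilde f\,\widetilde g$ and \eqref{exti} extends to $\Omega^0\mathcal P M$ by the Leibniz rule, we obtain ${\rm d}\zeta = \widetilde g\,{\rm d}\widetilde f + \widetilde f\,{\rm d}\widetilde g$, where the coefficients $\widetilde f(\gamma)$ and $\widetilde g(\gamma)$ are real numbers depending only on the point $\gamma \in \mathcal P M$.

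Next I would contract the candidate with $\widetilde\omega$. Define $\chi \in T\mathcal P M$ by $(\chi(\gamma))(t) := \widetilde g(\gamma)\,X_f(\gamma(t)) + \widetilde f(\gamma)\,X_g(\gamma(t))$, which is exactly the right-hand side of \eqref{hamilsum}; this is a genuine smooth vector field along $\gamma$ because $X_f$ and $X_g$ are, and the prefactors are $t$-independent scalars. By construction $d{\rm ev}_t(\chi(\gamma)) = (\chi(\gamma))(t)$, so substituting into \eqref{sympath} and using bilinearity of $\omega$ gives
$$
\widetilde\omega(\chi, v)(\gamma) = \widetilde g(\gamma)\int_0^1 \omega\bigl(X_f(\gamma(t)), d{\rm ev}_t(v)\bigr)\,dt + \widetilde f(\gamma)\int_0^1 \omega\bigl(X_g(\gamma(t)), d{\rm ev}_t(v)\bigr)\,dt\, .
$$
Using $\omega(X_f,\,-) = -df(-)$ and $\omega(X_g,\,-) = -dg(-)$, exactly as in the proof of Proposition~\ref{p:identify}, and then reassembling the two integrals via \eqref{exti}, the right-hand side becomes $-\widetilde g(\gamma)\,{\rm d}\widetilde f(v) - \widetilde f(\gamma)\,{\rm d}\widetilde g(v) = -{\rm d}\zeta(v)$. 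Thus $\chi$ solves \eqref{hamilpm}, and uniqueness identifies it with $\widetilde X_\zeta$.

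The main point requiring care is the bookkeeping of the two ``layers'' of integration. The prefactors $\widetilde f(\gamma) = \int_0^1 f(\gamma(s))\,ds$ and $\widetilde g(\gamma)$ are constant in the integration variable $t$ of \eqref{sympath}, so they factor out of that integral cleanly; by contrast the base Hamiltonian vector fields $X_f(\gamma(t))$ and $X_g(\gamma(t))$ genuinely vary with $t$ and are handled pointwise through Proposition~\ref{p:identify}. Once this separation is granted, the verification is merely bilinearity of $\omega$ together with the Leibniz-extended formula \eqref{exti}, and no essential difficulty remains. Finally, I note that the identical computation, carried out with the general Leibniz rule on an arbitrary element $\sum_i c_i\,(\widetilde{f_i})^{n_i}$ of $\Omega^0\mathcal P M$, shows that each such Hamiltonian vector field is a pointwise combination of the base fields $X_{f_i}(\gamma(t))$ with coefficients in $C^\infty(\mathcal P M)$, which is precisely the assertion that the Hamiltonian vector fields on $\Omega^0\mathcal P M$ are completely determined by those on $C^\infty(M)$.
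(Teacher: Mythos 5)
Your proof is correct and follows essentially the same route as the paper: both reduce the claim to Proposition~\ref{p:identify} together with the Leibniz identity $\widetilde X_{\zeta}=\widetilde f\,\widetilde X_{\widetilde g}+\widetilde g\,\widetilde X_{\widetilde f}$. The only difference is that you explicitly verify the candidate field against the defining equation \eqref{hamilpm} (thereby also settling existence, which is the delicate point for a weak symplectic form), whereas the paper simply asserts that identity; your version is a fleshed-out form of the same argument.
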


\begin{proof}
First we observe that by construction, the entire set 
${{\Omega}^{0}}\mathcal P {M}$ is generated by $C^{\infty}(M)$ (see \eqref{uptok}). 
On the other hand, we have already shown in Proposition~\ref{p:identify} that 
${\widetilde X}_{\widetilde f}$ is determined by $X_f$, 
thus it is sufficient to prove \eqref{hamilsum} for our purpose.

For ${\zeta}:={\widetilde f}\widetilde{g}$ we have the identity
$${\widetilde X}_{\zeta}\,=\,{\widetilde f}{\widetilde X}_{\widetilde g}+
{\widetilde g}{\widetilde X}_{\widetilde f}\, ,$$
which implies
$${\widetilde X}_{\zeta}(\gamma)\,=\,{\widetilde f}(\gamma)\cdot
{\widetilde X}_{\widetilde g}(\gamma)+{\widetilde g}(\gamma)\cdot
{\widetilde X}_{\widetilde f}(\gamma)\, .$$
So, by Proposition~\ref{p:identify} we have 
$$
\bigl({\widetilde X}_{\zeta}(\gamma)\bigr)(t)\,=\,
\bigl(\int_{0}^{1} f (\gamma(t)) dt\bigr)\cdot X_g(\gamma(t))+\bigl(\int_{0}^{1} g (\gamma(t)) dt\bigr)\cdot X_f(\gamma(t)).
$$
This completes the proof.
\end{proof}

Another consequence of Proposition~\ref{p:identify} is the following system of
identities for the Lie-bracket of Hamiltonian vector fields ${\widetilde 
X}_{\widetilde f}\in T\mathcal P {M}$ and ${\widetilde X}_{\widetilde g}\in T\mathcal P {M}$:
$$
\left([ {\widetilde X}_{\widetilde f}, {\widetilde X}_{\widetilde g}]
(\gamma)\right)(t)\,=\, [X_f, X_g](\gamma(t))\,=\,
\left({\widetilde X}_{\{{\widetilde f},{\widetilde g}\}}(\gamma)\right)(t)\,
=\,X_{\{f, g\}}(\gamma(t))\, ,
$$
where as usual 
$$
{\widetilde f}(\gamma)\,=\,\int_{0}^{1} f (\gamma(t)) dt\ \text{ and }\
{\widetilde g}(\gamma)\,=\,\int_{0}^{1} g (\gamma(t)) dt\, .
$$
\section{The symplectic potential} 

As before, $M$ is a compact connected manifold equipped with a symplectic form $\omega$. 
The symplectic form $\widetilde \omega$ 
on the path space $\mathcal P M$ is given  by \eqref{sympath}.
As $\omega$ is closed, it can locally be written as $d\theta$, where $\theta$ is a
$1$-form defined over some open subsets of $M$. We call $\theta$ the \textit{symplectic potential}. Obviously the choice of $\theta$ is not canonical.
In this section we will show that the choice of a particular local symplectic potentials $\theta$ for $\omega$ naturally defines a 
symplectic potential for $\widetilde \omega$ on $\mathcal P M$. 
 
Given a path $\gamma:[0,1]\longrightarrow M$ on $M$, we can partition the interval $[0,1]$ as 
$0 = t_0 < t_1 <\cdots < t_n = 1$ such that the image of the segment $\gamma\vert_{[t_{i-1}, t_i]}$  is entirely contained in a 
neighborhood  $U_i \subset M$ for each $i \in \{ 1,\cdots , n \}$, where $\omega\vert_{U_i}
\,=\, d \theta_i$. Let ${\mathcal U}$ be the set of all such paths:
$$
{\mathcal U} := \{ \gamma \in {\mathcal P}{\mathcal M}\,\mid\, {\rm Image}\left(\gamma\vert_{[t_{i - 1}, t_i ] }\right) \subset U_i, {\hskip 0.15 cm} {\rm for {\hskip 0.1 cm} each}{\hskip 0.2 cm}   i\in [1,n] \}\subset \mathcal P M.
$$
It should be mentioned that such a set exists because $M$ is compact. Now re-parametrize each interval ${[t_{i-1}, t_i]}$ 
by a $\sigma_i:{[t_{i-1}, t_i]}\longrightarrow [0,1]$. Define $\gamma_i$ as
$$
\gamma_i\circ \sigma_i\,:=\, \gamma\vert_{[t_{i-1}, t_i]}
$$
for every $\gamma \in {\mathcal U}.$ Hence each $\gamma_i\in \mathcal P M.$ Then the
right--hand side 
of \eqref{sympath} can be written as
\begin{equation}\label{parti}
\begin{split}
\widetilde \omega (v_1, v_2))_{(\gamma)} &= \int_{0}^{1} \omega\bigl(d {\rm ev}_t (v_1(\gamma)), d {\rm ev}_t (v_2(\gamma))\bigr) dt\\ 
&= \sum_{i=1}^{n} \int_{t_{i-1}}^{t_i} d \theta_i\bigl(d {\rm ev}_t (v_1(\gamma)), d {\rm ev}_t (v_2(\gamma))\bigr)  dt\\
&=\sum_{i=1}^{n} \int_{0}^{1} d \theta_i\bigl(d {\rm ev}_t (v_1(\gamma_i)), d {\rm ev}_t (v_2(\gamma_i))\bigr) 
\end{split}
\end{equation}
for all $\gamma \in {\mathcal U}$. Define a $1$-form $\beta$ on
${\mathcal U}\,\subset\, \mathcal P M$ as follows:
\begin{equation}\label{repara22}
\bigl(\beta(v)\bigr)({\gamma})\,=\,
\sum_{i=1}^{n}\int_{0}^{1} \theta_i\bigl(v(\gamma_i(t))\bigr)dt\, .
\end{equation}
We take note of the following identity. For a given $1$-form $\alpha$ in $M$, 
we define a $1$-form $A$ on ${\mathcal P}{M}$  by
$$\bigl(A(v)\bigr){(\gamma)} \,=\, \int_{0}^1 \alpha\bigl(d {\rm ev}_t (v(\gamma))\bigr)
dt\, , $$ 
where $v \in T {\mathcal P}{M}.$ Since the exterior derivation commutes with the pull-back
operation, we have
$$\bigl({\rm d} A (v_1, v_2)\bigr)({\gamma})\,=\, \int_{0}^{1} d \alpha
\bigl(d {\rm ev}_t (v_1(\gamma)), d {\rm ev}_t (v_2(\gamma))\bigr) dt\, .$$
Thus from \eqref{parti},
$$
{\widetilde \omega}\vert_{\mathcal U}\,=\, {\rm d}\beta\, .
$$
Therefore, we have the following proposition.

\begin{proposition}\label{p:localsympotpath}
Let $\omega$ be a symplectic form on $M$. Let $0 = t_0 < t_1 <\cdots < t_n = 1$ be a
partition of $[0, 1]$, subject to the following conditions:
\begin{enumerate}
\item for each $i \in \{1, \cdots ,n\}$, we have an open subset $U_i\subset M$
on which $\omega\vert_{U_i}\,=\,d\theta_i$, and 
\item we have ${\mathcal U} \subset \mathcal P M$, such that each $\gamma \in {\mathcal U}$
satisfies $${\rm Image}\left(\gamma\vert_{[t_{i - 1}, t_i ]}\right)\,\subset\, U_i$$
for each $i\in \{1, \cdots ,n\}$.
\end{enumerate}
 Then the symplectic form $\widetilde \omega$ restricted to $\mathcal U$ can be written as
 $$\widetilde \omega\vert_{\mathcal U}\,=\,{\rm d}\beta,$$
where $\beta$ is defined in \eqref{repara22}.
\end{proposition}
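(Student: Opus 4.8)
The plan is to localize the integral defining $\widetilde{\omega}$ using the given partition, replace $\omega$ by the local primitive $d\theta_i$ on each segment, and then recognize the resulting expression as an exterior derivative by invoking the fact that the de Rham differential commutes with the Chen-type fiber integration $\int_0^1 {\rm ev}_t^*(-)\,dt$. Concretely, I would begin from the definition \eqref{sympath} of $\widetilde{\omega}(v_1,v_2)(\gamma)$ for a fixed path $\gamma \in \mathcal{U}$. Because the $i$-th segment of $\gamma$ lies in $U_i$ and $\omega\vert_{U_i} = d\theta_i$, the single integral over $[0,1]$ splits as the finite sum $\sum_i \int_{t_{i-1}}^{t_i} d\theta_i\bigl(d{\rm ev}_t(v_1), d{\rm ev}_t(v_2)\bigr)\,dt$, exactly as displayed in \eqref{parti}. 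Reparametrizing each subinterval $[t_{i-1},t_i]$ to $[0,1]$ via $\sigma_i$ rewrites each summand as an integral over the standard interval against the reparametrized segment $\gamma_i$, which is the shape in which $\beta$ is defined in \eqref{repara22}. In effect, the computation preceding the statement already assembles these ingredients, and the task is to organize them.

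The analytic core of the argument is the auxiliary identity recorded just before the proposition: for a $1$-form $\alpha$ on $M$, the induced $1$-form $A$ on $\mathcal{P}M$ given by $(A(v))(\gamma) = \int_0^1 \alpha(d{\rm ev}_t(v(\gamma)))\,dt$ satisfies $({\rm d}A(v_1,v_2))(\gamma) = \int_0^1 d\alpha(d{\rm ev}_t(v_1), d{\rm ev}_t(v_2))\,dt$. This is precisely the statement that ${\rm d}$ commutes with integration over the compact parameter fiber $[0,1]$, and it is the extension to $1$-forms, via the Cartan formula, of the differentiation rule \eqref{exti}; I would either derive it directly or cite the Chen calculus in \cite{BC, chen}. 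Applying this identity with $\alpha = \theta_i$ turns each summand $\int_0^1 d\theta_i(\ldots)\,dt$ into ${\rm d}B_i$, where $B_i$ is the $i$-th term of $\beta$. Summing over $i$ and comparing with \eqref{repara22} then yields $\widetilde{\omega}\vert_{\mathcal{U}} = \sum_i {\rm d}B_i = {\rm d}\beta$, which is the desired conclusion.

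I expect the principal obstacle to lie in the reparametrization step rather than in the commutation identity. One must verify that $\beta$ is a well-defined smooth $1$-form on $\mathcal{U}$ and keep careful track of the change of variables $t \mapsto \sigma_i(t)$, including the resulting Jacobian, so that the passage from the second to the third line of \eqref{parti} is justified; alternatively, one can sidestep the reparametrizations entirely by defining the primitive directly as $\beta(v)(\gamma) = \sum_i \int_{t_{i-1}}^{t_i} \theta_i(d{\rm ev}_t(v(\gamma)))\,dt$ and applying the commutation identity segment by segment. A secondary point to confirm is that the hypothesis ${\rm Image}(\gamma\vert_{[t_{i-1},t_i]}) \subset U_i$ guarantees that every tangent vector $d{\rm ev}_t(v(\gamma))$ appearing in the integrand genuinely lies in the domain of $\theta_i$, so that each local primitive is used only where it is defined. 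This is exactly the role of the condition defining $\mathcal{U}$, and it is what forces the existence of such a partition, which in turn uses the compactness of $M$.
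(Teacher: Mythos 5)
Your argument is correct and follows essentially the same route as the paper: the splitting of the integral via the partition and the local primitives $\theta_i$ is exactly the computation in \eqref{parti}, and the key step — that the de Rham differential commutes with the fiber integration $\int_0^1 {\rm ev}_t^*(-)\,dt$ — is precisely the auxiliary identity the paper records just before stating the proposition. Your observation that the reparametrizations $\sigma_i$ could be avoided by defining $\beta$ segmentwise is a reasonable (and arguably cleaner) variant, but it does not change the substance of the proof.
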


We call $\beta$ to be a \textit{potential $1$-form} of $\widetilde \omega$ on ${\mathcal U}.$

\section{Line bundle on the path space}

Let $(M,\, \omega)$ be a compact connected symplectic manifold
such that the cohomology class of $(2\pi\hbar)^{-1}\omega$ lies in the
image of $H^2(M,\, {\mathbb Z})$ in $H^2(M,\, {\mathbb R})$, where
$\hbar=h/2\pi,$ and $h$ is the Planck constant. This implies that there exists
a Hermitian line bundle $L\longrightarrow M$ and a connection $\nabla$ on $L$, whose
curvature is $\hbar^{-1}\cdot\omega$ \cite{Wo}. Locally $\nabla$ is given by
$$\nabla\,:=\,d-\sqrt{-1}\hbar^{-1}\theta\, ,$$
where $\omega=d\theta$ on some open set $U\subset M.$ In this section,
we will construct a line bundle over $\mathcal P M$ and a connection ${\widetilde \nabla}$
on that line bundle with curvature $\hbar^{-1}\cdot\widetilde \omega$, where
$\widetilde \omega$ is the symplectic form defined in \eqref{sympath}. Note that since
the connection $\nabla$
has curvature $\hbar^{-1}\cdot\omega,$ then holonomy around a  loop $\gamma$ on $M$ can be written as
$${\rm exp}\left(\frac{\sqrt{-1}}{\hbar}\int_{\Sigma}\omega\right),$$
where $\Sigma$ is an orientable surface spanning loop $\gamma$. So to have a well defined
holonomy, for any orientable closed surface ${\Sigma_{\rm cl}}$,
the form $\omega$ must satisfy the following condition~\cite{Wo}:
\begin{equation}\label{inten}
\int_{\Sigma_{\rm cl}}{\omega}\,=\,2\pi n \hbar\, ,
\end{equation}
where $n\in {\mathbb Z}.$ Here, the constant factor
 $(2\pi\hbar)^{-1}$ in the cohomology condition  has been chosen so that our construction is consistent 
with Dirac's prescription in quantum mechanics: If $A, B$ are two classical observables,
then their quantum mechanical counterparts, Hermitian operators ${\hat A}, {\hat B}$
should satisfy
$$[\widehat{A}\, , \widehat{B}]\,=\,-\sqrt{-1}\hbar \widehat{\{A,B\}},$$ 
where $\{,\, \}$ is the Poisson bracket and $[-\, , - ]$ is the commutator.

In order to construct such a line bundle on $\mathcal P M$, we will use a well-known 
identity related to  \textit {Chen integral}~\cite{chen, chen1}. Let us recall 
the definition of a first order Chen integral. Given a $p$-form
$\alpha_p$ on a manifold $M$, we define the first order Chen integral as
\begin{equation}\label{firstchen}
\begin{split}
&\left((\int_{\rm Chen} \alpha_p)(v_1,\cdots, v_{p-1})\right)({\gamma})\\ 
:=&\int_{0}^{1} \alpha_p\bigl(\stackrel{{.}}{\gamma}(t) , 
d {{\rm ev}_t}(v_1(\gamma)), \cdots, d {{\rm ev}_t}(v_{p-1}(\gamma))\bigr) dt\, ,
\end{split}
\end{equation} 
where $\gamma\,\in\,{\mathcal P} M$ and $v_1, \cdots, v_{p-1}$ are vector fields
on ${\mathcal P} M$. 
Clearly, $\int_{\rm Chen} \alpha_p$ in the left--hand side of \eqref{firstchen} is a $p-1$ form on
the path space. 
Higher order Chen integrals can be defined by  iteration. However we will only use the
first order Chen integral, so we are not going to discuss a Chen integral in its
full generality. Before proceeding further, let us clarify our notation for various integrals 
used here.
\begin{itemize}
\item We denote a first order Chen integral of a form $\alpha$ on $M$ as 
$\int_{\rm Chen} \alpha$.
\item $\int_{N}\alpha$ is the integral of a $p$-form $\alpha$
defined on $M$ over the submanifold $N\,\subset\, M$ of dimension $p$.
\item $\int_{\gamma}\alpha$ is the line integral along a path $\gamma:[0,1]
\,\longrightarrow \,X$ of a $1$-form $\alpha$ on some space $X$.
\end{itemize}
We have the following formula for the exterior derivative for a 
first order Chen integral~(see \cite[Proposition 3.1]{CLS}):
\begin{equation}\label{exterifirstchen}
{\rm d} \int^t_{\rm Chen} \alpha_p \,=\, {\rm ev}_t^* \alpha_p - {\rm ev}_0^* \alpha_p - \int^t_{\rm Chen} d \alpha_p\, ,
\end{equation}
where 
\begin{equation}\label{trancchen}
\begin{split}
&\left((\int_{\rm Chen}^t \alpha) (v_1,\cdots,v_{p-1})\right)(\gamma_t) \\
:=&\int_{0}^t \alpha\bigl(\stackrel{.}{\gamma}(\tau), d {{\rm ev}_{\tau}}(v_1(\gamma)), \cdots, d {{\rm ev}_{\tau}}(v_{p-1}(\gamma))\bigr) d \tau
\end{split}
\end{equation}
and $\gamma_t:[0,t]\longrightarrow M$, $t\in [0, 1]$.

Now consider the symplectic form $\omega$ on $M$. Since $d\omega=0$,
the identity \eqref{exterifirstchen} implies that
\begin{equation}\label{homologsympl}
{\rm d} \int^t_{\rm Chen} \omega \,=\, {\rm ev}_t^* \omega - {\rm ev}_0^* \omega\, .
\end{equation}
Integrating both sides of \eqref{homologsympl} we obtain
\begin{equation}\label{symplecglobal}
{\rm d} \lambda \,=\, {\widetilde \omega} - {\rm ev}_0^* \omega\, ,
\end{equation}
where $\lambda$ is a global $1$-form on $\mathcal P M$ defined using \eqref{trancchen} as
\begin{equation}\label{global1form}
\bigl(\lambda(v)\bigr)(\gamma) := \int_0^1 \left(\int_0^t 
\omega(\stackrel{.}{\gamma}(\tau), d {{\rm ev}_{\tau}}(v(\gamma))) d \tau\right) dt
\end{equation}
for any vector field $v$ on $\mathcal P M$; the form $\widetilde \omega$ is defined as in \eqref{sympath}
$$\bigl(\widetilde \omega(v_1, v_2)\bigr)({\gamma}) = \int_{0}^{1} \omega\bigl(d {{\rm ev}_{t}}(v_{1}(\gamma)),
d {{\rm ev}_{t}}(v_{1}(\gamma))\bigr) dt\, ,$$
with $v_1, v_2$ being vector fields on $\mathcal P M$.

\begin{lemma}\label{l:globalone}
Let $\omega$ be a symplectic form on $M$ and $\widetilde \omega$ be the symplectic form on $\mathcal P M$ defined in \eqref{sympath}.
 Then $\widetilde \omega$ can be written as a sum of pullback of $\omega$ and a globally defined exact form:
$${\widetilde \omega} \,=\, {\rm ev}_0^* \omega+{\rm d} \lambda \, ,$$
where $\lambda$ is defined in \eqref{global1form}.
\end{lemma}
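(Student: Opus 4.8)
The plan is to obtain the decomposition directly by integrating the Chen-integral identity \eqref{homologsympl} over the auxiliary parameter, since the statement is essentially a repackaging of \eqref{symplecglobal}. First I would recall that for each fixed $t\in[0,1]$ the first order Chen integral $\int^t_{\rm Chen}\omega$ of the $2$-form $\omega$ is a \emph{globally defined} $1$-form on $\mathcal P M$, by \eqref{trancchen}. This is the crucial structural point that distinguishes the present construction from the local potential $\beta$ of Proposition~\ref{p:localsympotpath}: because $\omega$ is a globally defined closed form on $M$, no choice of local primitive $\theta_i$ is needed, and consequently the resulting $\lambda$ will be global. The identity \eqref{homologsympl}, namely ${\rm d}\int^t_{\rm Chen}\omega \,=\, {\rm ev}_t^*\omega-{\rm ev}_0^*\omega$, is then immediate from the general formula \eqref{exterifirstchen} together with $d\omega=0$.

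The core of the argument is to integrate \eqref{homologsympl} over $t\in[0,1]$ and to interchange the exterior derivative ${\rm d}$ on $\mathcal P M$ with the integration $\int_0^1(\cdot)\,dt$. This interchange is legitimate because $t$ is an auxiliary parameter ranging over the fixed interval $[0,1]$, independent of the coordinates on $\mathcal P M$, so differentiation under the integral sign applies and gives
$$
\int_0^1 {\rm d}\Bigl(\int^t_{\rm Chen}\omega\Bigr)\,dt \,=\, {\rm d}\int_0^1\Bigl(\int^t_{\rm Chen}\omega\Bigr)\,dt \,=\, {\rm d}\lambda,
$$
where the last equality is the definition \eqref{global1form} of $\lambda$ rewritten through \eqref{trancchen} as $\lambda\,=\,\int_0^1\bigl(\int^t_{\rm Chen}\omega\bigr)\,dt$ evaluated on a vector field $v$.

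For the right-hand side I would split $\int_0^1\bigl({\rm ev}_t^*\omega-{\rm ev}_0^*\omega\bigr)\,dt$ into two pieces. By the very definition \eqref{sympath} of $\widetilde\omega$ one has $\int_0^1{\rm ev}_t^*\omega\,dt \,=\, \widetilde\omega$, while the integrand ${\rm ev}_0^*\omega$ is independent of $t$, so $\int_0^1{\rm ev}_0^*\omega\,dt \,=\, {\rm ev}_0^*\omega$. Combining the two computations, the integrated identity reads ${\rm d}\lambda \,=\, \widetilde\omega-{\rm ev}_0^*\omega$, which rearranges to the asserted formula $\widetilde\omega \,=\, {\rm ev}_0^*\omega+{\rm d}\lambda$. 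I do not expect a genuine obstacle: the only points demanding care are the justification of differentiating under the integral sign (which is routine, given that the dependence on $t$ is through a smooth integrand over a compact interval) and the bookkeeping verification that the $\lambda$ of \eqref{global1form} indeed equals $\int_0^1\int^t_{\rm Chen}\omega\,dt$ when tested against a vector field on $\mathcal P M$.
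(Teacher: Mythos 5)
Your proposal is correct and follows essentially the same route as the paper: the authors also derive \eqref{homologsympl} from \eqref{exterifirstchen} using $d\omega=0$, integrate over $t\in[0,1]$ (implicitly interchanging ${\rm d}$ with the $t$-integration), and identify $\int_0^1{\rm ev}_t^*\omega\,dt$ with $\widetilde\omega$ and $\int_0^1\bigl(\int^t_{\rm Chen}\omega\bigr)dt$ with $\lambda$ to obtain \eqref{symplecglobal}. Your write-up merely makes explicit the bookkeeping that the paper leaves tacit.
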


Note that if we considered some other interval $[a, b]\subset {\mathbb R}$ to define the path space as 
$\{\gamma:[a, b]\longrightarrow M\}$, we still could write \eqref{symplecglobal} as
$${\rm d} \lambda^{\prime}\,=\, {\widetilde \omega}^{\prime} - {\rm ev}_a^* \omega^{\prime}
\, ,
$$
where
\begin{equation}\label{b-a}
\begin{split}
&\bigl({\widetilde \omega}^{\prime}(v_1, v_2)\bigr)({\gamma}) := \int_{a}^{b} \omega\bigl(v_1(\gamma(t)), v_2(\gamma(t))\bigr) dt\, ,\\
& \omega^{\prime}\,:=\,(b-a)\omega\, ,\\
&\bigl(\lambda^{\prime}(v)\bigr)(\gamma) \,:=\, 
\int_a^b \left(\int_a^t \omega(\stackrel{.}{\gamma}(\tau), 
v(\gamma(\tau)) d \tau\right) dt\, .
\end{split}
\end{equation}
Hence, our new path space symplectic form ${\widetilde \omega}^{\prime}$ would continue to be
the sum of the pull-back of the symplectic form 
$\omega^{\prime}$ and the exterior derivative of a globally defined $1$-form $\lambda^{\prime}$ on $\mathcal P M$.

Let $L$ be a complex line bundle on $M$, $H$ a Hermitian structure on $L$ and
$\nabla$ a Hermitian connection on $L$ such that the curvature of $\nabla$ is
$\hbar^{-1}\cdot\omega$. We noted earlier that the integrality condition on the
cohomology class of $\omega$ ensures that such a triple $(L\, ,H\, ,\nabla)$ exists.

First observe that by Lemma~\ref{l:globalone}, the form $\widetilde \omega$ is
the sum of pull-back of $\omega$ and an exact form ${\rm d}\lambda$. 
So using the map ${\rm ev}_0: \gamma \longmapsto \gamma(0)$, we first construct the
pull-back line bundle ${\rm ev}_0^* L$ on the path space ${\mathcal P}M$.
The Hermitian structure $H$ on $L$ pulls back to a Hermitian structure on
${\rm ev}_0^* L$. The Hermitian connection $\nabla$ on $L$ pulls back to a Hermitian
connection ${\rm ev}_0^*\nabla$ on ${\rm ev}_0^* L$. Since the curvature of $\nabla$
is $\hbar^{-1}\cdot\omega$, we conclude that the curvature of this connection ${\rm ev}_0^*\nabla$ is
$\hbar^{-1}\cdot{\rm ev}_0^* \omega$. Note that $\lambda$ is a globally defined real $1$-form
on $\mathcal P M$. As the space of Hermitian connections over a
complex line bundle is an affine space for the space of real $1$-forms,
\begin{equation}\label{conpullback}
\widetilde \nabla\,:=\, {\rm ev}_0^* \nabla-\sqrt{-1}\hbar^{-1}\cdot\lambda
\end{equation} 
is a Hermitian connection on the Hermitian line bundle
$({\rm ev}_0^* L\, , {\rm ev}_0^* H)$. From Lemma \ref{l:globalone} we conclude
that the curvature of $\widetilde \nabla$ is $\hbar^{-1}\cdot\widetilde \omega$.

\begin{proposition}\label{p:tildenabla}
Suppose $(L\, , H)$ be a Hermitian  line bundle on $M$ with a Hermitian
connection $\nabla$ whose curvature is $\hbar^{-1}\cdot\omega$. 
Then the Hermitian connection $\widetilde \nabla\,=\,{\rm ev}_0^* \nabla-\sqrt{-1}\hbar^{-1}\cdot\lambda$ on the Hermitian line bundle
$({\rm ev}_0^* L\, , {\rm ev}_0^* H)$ over ${\mathcal P}M$ has
curvature  $\hbar^{-1}\cdot\widetilde \omega$, where  $\lambda$ and
$\widetilde \omega$ are constructed in \eqref{global1form}
and \eqref{sympath} respectively.
\end{proposition}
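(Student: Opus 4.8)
The plan is to assemble the pieces already established in the paragraph preceding the statement, with Lemma~\ref{l:globalone} as the key input; the proposition is essentially a clean restatement of that discussion. I would verify in turn the two assertions it contains: that $\widetilde\nabla$ is a genuine Hermitian connection on $({\rm ev}_0^* L\, ,{\rm ev}_0^* H)$, and that its curvature equals $\hbar^{-1}\cdot\widetilde\omega$.

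First I would record how the relevant structures behave under pullback along ${\rm ev}_0$. The Hermitian structure $H$ pulls back to ${\rm ev}_0^* H$, and the Hermitian connection $\nabla$ pulls back to a connection ${\rm ev}_0^*\nabla$ that is Hermitian with respect to ${\rm ev}_0^* H$; moreover, since curvature commutes with pullback of connections, the curvature of ${\rm ev}_0^*\nabla$ is ${\rm ev}_0^*(\hbar^{-1}\omega)\,=\,\hbar^{-1}\cdot{\rm ev}_0^*\omega$. Next, because $\lambda$ from \eqref{global1form} is a globally defined real $1$-form, the term $-\sqrt{-1}\hbar^{-1}\lambda$ is an admissible shift within the affine space of Hermitian connections on a complex line bundle; hence $\widetilde\nabla\,=\,{\rm ev}_0^*\nabla-\sqrt{-1}\hbar^{-1}\lambda$ is again a Hermitian connection on $({\rm ev}_0^* L\, ,{\rm ev}_0^* H)$.

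For the curvature I would compute using a local connection potential. On an open subset of $M$ where $\omega\,=\,d\theta$, the connection $\nabla$ reads $d-\sqrt{-1}\hbar^{-1}\theta$, so ${\rm ev}_0^*\nabla\,=\,d-\sqrt{-1}\hbar^{-1}\,{\rm ev}_0^*\theta$ and therefore $\widetilde\nabla\,=\,d-\sqrt{-1}\hbar^{-1}({\rm ev}_0^*\theta+\lambda)$. Thus ${\rm ev}_0^*\theta+\lambda$ serves as a local connection potential for $\widetilde\nabla$, and in the paper's convention its curvature is $\hbar^{-1}\,{\rm d}({\rm ev}_0^*\theta+\lambda)\,=\,\hbar^{-1}({\rm ev}_0^*\omega+{\rm d}\lambda)$; equivalently, in global terms the affine shift by $-\sqrt{-1}\hbar^{-1}\lambda$ changes the curvature of ${\rm ev}_0^*\nabla$ by exactly $\hbar^{-1}{\rm d}\lambda$. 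Applying Lemma~\ref{l:globalone}, which states precisely that $\widetilde\omega\,=\,{\rm ev}_0^*\omega+{\rm d}\lambda$, then yields curvature $\hbar^{-1}\cdot\widetilde\omega$, as claimed. Finally, to substantiate the remark made in the introduction, I would note by comparing \eqref{global1form} with \eqref{firstchen} and \eqref{trancchen} that $\lambda$ is the integral over $t\in[0,1]$ of the truncated first order Chen integrals $\int_{\rm Chen}^t\omega$ of the $2$-form $\omega$, which is exactly how $\lambda$ arose from integrating \eqref{homologsympl}.

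Since every ingredient is already in place, I do not expect a serious obstacle; the proof is essentially bookkeeping on top of Lemma~\ref{l:globalone}. The one point requiring a little care is the convention tying the sign and the factor $\sqrt{-1}$ in the added $1$-form to the resulting change in curvature: one must confirm that subtracting $\sqrt{-1}\hbar^{-1}\lambda$ from ${\rm ev}_0^*\nabla$ produces the additive term $+\hbar^{-1}{\rm d}\lambda$ in the curvature, and not its negative or a form rescaled by $\sqrt{-1}$. This is why I prefer to verify the claim through the explicit local potential $d-\sqrt{-1}\hbar^{-1}({\rm ev}_0^*\theta+\lambda)$, which is manifestly consistent with the paper's local formula $\nabla\,=\,d-\sqrt{-1}\hbar^{-1}\theta$ and its stated curvature $\hbar^{-1}\cdot\omega$, rather than invoking an abstract curvature-shift formula whose normalization could clash with that convention.
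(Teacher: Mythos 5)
Your proposal is correct and follows essentially the same route as the paper: pull back the bundle, metric and connection along ${\rm ev}_0$, note that the curvature of ${\rm ev}_0^*\nabla$ is $\hbar^{-1}\cdot{\rm ev}_0^*\omega$, observe that the affine shift by the globally defined real $1$-form $\lambda$ changes the curvature by $\hbar^{-1}\,{\rm d}\lambda$, and conclude via Lemma~\ref{l:globalone}. Your additional verification of the sign convention through the local potential $d-\sqrt{-1}\hbar^{-1}({\rm ev}_0^*\theta+\lambda)$ is a sensible precaution but not a different method.
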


We digress here for a moment to observe that, 
albeit in  entirely different contexts, connections similar to \eqref{conpullback} appear for the principal $G$ bundle 
${\mathcal P}_{\overline A}P\longrightarrow \mathcal P M$, where $\overline A$ is a connection on a principal 
$G$ bundle $P \longrightarrow M$ and ${\mathcal P}_{\overline A}P\subset {\mathcal P}P$ is the space 
of horizontally lifted paths by the connection ${\overline A}$ ~(\cite{catta},
\cite[Proposition 2.2]{CLS2}).

%%%%%%%%%%%%%%%%%%%%%FIGURE%%%%%%%%%%%%%%%%%%
\begin{figure}[ht]
\begin{center}
\epsfxsize=3.0in \epsfysize=2.5in
\rotatebox{0}{\epsfbox{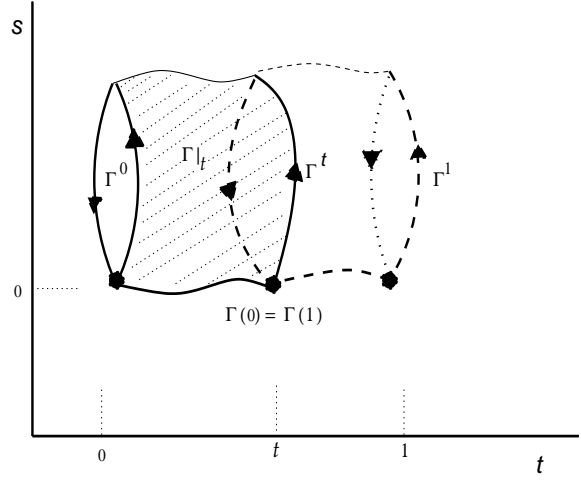}}
\caption{A loop $\Gamma$ on path space}
\end{center}
\end{figure}\label{f:paraclose2}
%%%%%%%%%%%%%%%%%FIGURE%%%%%%%%%%%%%%%%%%%%%%%%

Here we give an explicit expression for the holonomy of the connection ${\widetilde \nabla}$. 
Consider a ``path on path space" ${\mathcal P} M$, defined as 
$$
\Gamma\,:\,[0,1] \,\longrightarrow\, {\mathcal P} M\, .
$$
Hence 
$$
\Gamma\,:\, [0,1]\times [0,1]\,\longrightarrow\, M\, , \ 
(s, t)\longmapsto \Gamma(s,t):=\Gamma_s(t):=\Gamma^{t}(s)\, .
$$
We call the path  $\Gamma_s:[0, 1]\longrightarrow M$, obtained by fixing the first coordinate $s\in [0, 1]$, a \textit{transversal path}
and the path  $\Gamma^t:[0, 1]\longrightarrow M$, obtained by fixing the second coordinate $t\in [0, 1]$, a \textit{longitudinal path}. 
We define the ``line integral'' of the $1$-form $\lambda$ on $\mathcal P M$, given by \eqref{global1form}, along 
the path on path space $\Gamma$ to be
\begin{equation}\label{pathlinelambda}
\begin{split}
\int_{\Gamma}\lambda\,&:=\,\int_{0}^1 \lambda (\Gamma^{\prime}(s)) ds\\
&=\,\int_{0}^1 \left( \int_{0}^1 dt \int_{0}^ t \omega ( \stackrel{.}
{\Gamma}(s, \tau), \Gamma^{\prime} (s, \tau)) d \tau\right)ds\, ,
\end{split}
\end{equation}
where 
 $ \stackrel{.}{\Gamma}(s, \tau):= \frac{\partial \Gamma(s, \tau)}{\partial \tau}$ and $\Gamma^{\prime} (s, \tau) := \frac{\partial \Gamma(s, \tau)}{\partial s}$. 
As integrations with respect to $s$ and $t$ are interchangeable, we can write \eqref{pathlinelambda} as
$$\int_{0}^1 \left(\int_{0}^1 ds \int_{0}^ t \omega ( \stackrel{.}{\Gamma}(s, 
\tau), \Gamma^{\prime} (s, \tau)) d \tau\right)dt\, .$$
Observe that 
$$\int_{0}^1 ds \int_{0}^ t \omega ( \stackrel{.}{\Gamma}(s, \tau), 
\Gamma^{\prime} (s, \tau)) d \tau\,=\,\int_{\Gamma\vert_t}\omega\, ,$$
where $\Gamma\vert_t\,:\,[0, 1]\times [0, t]\,\longrightarrow
\,M$. Thus \eqref{pathlinelambda} reads as
\begin{equation}\label{lambdasimple}
\int_{\Gamma}\lambda\,=\,\int_0^1 dt\int_{\Gamma\vert_t}\omega\, .
\end{equation}
A ``loop'' on the path space is given by a map
$$
\Gamma: [0,1] \longrightarrow {\mathcal P}M
$$
such that $\Gamma(0) \,=\, \Gamma (1)$.
Consequently, each longitudinal path $\Gamma^t:[0, 1]\longrightarrow M, t\in [0, 1]$, is a loop on $M$
based at the point 
$$\Gamma^t(0)\,=\,\bigl(\Gamma(0)\bigr)(t)\,=\,\Gamma^t(1)
\,=\,\bigl(\Gamma(1)\bigr)(t)\, .$$

Now let ${\rm G}$ be a parametrized ``surface" on path space $\mathcal P M$:
\begin{equation}\label{surfacepm}
\begin{split}
{\rm G}&:[0, 1]\times [0, 1]\longrightarrow \mathcal P M,\\
&(\sigma, \epsilon)\longmapsto {\rm G}(\sigma, \epsilon)\in \mathcal P M,\\
\Rightarrow &\bigl({\rm G}(\sigma, \epsilon)\bigr)(t)\in M, \quad t\in [0, 1].
\end{split}
\end{equation}
So, given such a surface $\rm G$ on $\mathcal P M$, for each $t\in [0, 1]$ we have a
surface ${\rm G}_t:[0, 1]\times [0, 1]\longrightarrow M$ on $M$ defined by
\begin{equation}\label{surpmtom}
{\rm G}_t(\sigma, \epsilon)\,:=\,\bigl({\rm G}(\sigma, \epsilon)\bigr)(t)\, .
\end{equation}
A surface $\rm G$ on $\mathcal P M$  spans a loop $\Gamma$ on $\mathcal P M$, when for each $t\in [0, 1]$ the associated
surface ${\rm G}_t$ on $M$ spans the loop $\Gamma^t$ on $M$. Orientation of each $\Gamma^t$ induces an orientation 
on each ${\rm G}_t.$

\begin{proposition}
Let $\theta$ be a symplectic potentials for the symplectic form $\omega$ on $M$; that is locally
$\omega\,=\,d\theta$, and let
$\nabla$ be a connection on the line bundle $L\,\longrightarrow \,M$ with
curvature $\hbar^{-1}\omega$.
Let  $\widetilde \nabla={\rm ev}_0^* \nabla-\sqrt{-1}\hbar^{-1}\lambda$ be the
connection on the line bundle ${\rm ev}_0^* L\,\longrightarrow \,
{\mathcal P}M$, where $\lambda$ as defined in \eqref{global1form}. Then
the holonomy of $\widetilde \nabla$ around a loop $\Gamma$ on $\mathcal P M$ is:
\begin{enumerate}
\item{when $\Gamma$ is entirely contained in a single domain of $\theta$:
\begin{equation}\label{holopath}
{\rm exp}\left(\frac{{\sqrt{-1}}}{\hbar}\int_0^1 dt \oint_{\Gamma^t}\theta\right),
\end{equation}
}
\item{more generally,
\begin{equation}\label{holopathgen}
{\rm exp}\left(\frac{{\sqrt{-1}}}{\hbar}\int_0^1 dt (\int_{{\Sigma_t}}\omega)\right),
\end{equation}
where $\Sigma_t$ is an oriented surface spanning $\Gamma^t$,
is independent of the choice of $\Sigma_t$ because of the condition
in \eqref{inten}.}
\end{enumerate}
\end{proposition}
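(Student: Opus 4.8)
The plan is to compute the holonomy of $\widetilde\nabla$ around the loop $\Gamma$ directly from the definition \eqref{conpullback}, namely $\widetilde\nabla = {\rm ev}_0^*\nabla - \sqrt{-1}\hbar^{-1}\lambda$. Since holonomy of a connection around a loop is obtained by exponentiating (a suitable integral of) the connection form, and the curvature of $\widetilde\nabla$ is $\hbar^{-1}\widetilde\omega$ by Proposition~\ref{p:tildenabla}, the natural approach is to express the holonomy as $\exp\left(\frac{\sqrt{-1}}{\hbar}\int_{\rm G}\widetilde\omega\right)$, where $\rm G$ is a surface on $\mathcal P M$ spanning the loop $\Gamma$, in analogy with the classical formula for holonomy in terms of the flux of the curvature through a spanning surface. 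The task then reduces to evaluating $\int_{\rm G}\widetilde\omega$ and rewriting it in the two forms stated.

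First I would treat case (1). When $\Gamma$ lies entirely in a single domain $U$ of the potential $\theta$, we have $\omega = d\theta$ on $U$, so by Proposition~\ref{p:localsympotpath} (or directly via the Chen-integral identity \eqref{exterifirstchen}) the holonomy can be written using the potential $1$-form $\beta$, giving an exact expression. Concretely, the holonomy is $\exp\left(\frac{\sqrt{-1}}{\hbar}\oint_{\Gamma}\beta\right)$ where $\beta$ is the potential for $\widetilde\omega$; unwinding the definition of $\beta$ and using $\omega=d\theta$ together with Stokes' theorem fibrewise in $t$ should collapse the surface integral over ${\rm G}_t$ into the line integral $\oint_{\Gamma^t}\theta$ along the boundary loop $\Gamma^t$ on $M$. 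Integrating over $t\in[0,1]$ then yields \eqref{holopath}.

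For case (2), I would use Lemma~\ref{l:globalone}, which writes $\widetilde\omega = {\rm ev}_0^*\omega + {\rm d}\lambda$, so that the flux through the spanning surface $\rm G$ splits into a contribution from ${\rm ev}_0^*\omega$ and one from ${\rm d}\lambda$. Since $\Gamma$ is a loop, the contribution from the exact piece ${\rm d}\lambda$ reduces by Stokes to $\oint_\Gamma\lambda$, which by \eqref{lambdasimple} equals $\int_0^1 dt\int_{\Gamma|_t}\omega$. Assembling all terms, and recalling that for each fixed $t$ the surface ${\rm G}_t$ spans the loop $\Gamma^t$ on $M$ (as noted just before the statement), the combined expression becomes $\int_0^1 dt\int_{\Sigma_t}\omega$ where $\Sigma_t = {\rm G}_t$ is the oriented spanning surface for $\Gamma^t$. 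This produces \eqref{holopathgen}. Finally, the independence of the choice of $\Sigma_t$ follows because any two choices differ by an oriented closed surface $\Sigma_{\rm cl}$, over which $\int_{\Sigma_{\rm cl}}\omega = 2\pi n\hbar$ by the integrality condition \eqref{inten}, so the exponential $\exp\left(\frac{\sqrt{-1}}{\hbar}\int_0^1 dt\, 2\pi n_t\hbar\right)$ is trivial once one observes the $n_t$ are integer-valued and the continuity in $t$ forces them constant.

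The main obstacle I anticipate is making the passage from ``holonomy equals exponential of the flux of the curvature through a spanning surface'' rigorous on the infinite-dimensional weakly symplectic path space, where the standard finite-dimensional arguments for this formula require care; in particular, one must justify interchanging the fibrewise ($t$) integration with the surface integration on $\mathcal P M$ and verify that the reduction of the surface ${\rm G}$ on $\mathcal P M$ to the family ${\rm G}_t$ of surfaces on $M$ is compatible with the Chen-integral structure of $\lambda$. The cleanest route is likely to avoid abstract infinite-dimensional holonomy theory and instead compute $\oint_\Gamma$ of the explicit connection $1$-form \eqref{conpullback} directly using the formulas \eqref{global1form} and \eqref{lambdasimple}, reducing everything to ordinary integrals on $M$ fibrewise in $t$.
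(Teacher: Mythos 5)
Your case (2) is essentially the paper's argument: the authors likewise invoke ``holonomy $=\exp$ of the curvature flux through a spanning surface ${\rm G}$ on $\mathcal P M$'' (justified by Proposition~\ref{p:tildenabla}), except that they do not split $\widetilde\omega$ via Lemma~\ref{l:globalone} at all --- they simply insert the definition \eqref{sympath} to rewrite the flux as $\int_0^1 dt\iint\omega(\partial_\sigma{\rm G}_t,\partial_\epsilon{\rm G}_t)\,d\sigma\,d\epsilon$ and then replace ${\rm G}_t$ by an arbitrary $\Sigma_t$ up to $2\pi n\hbar$ using \eqref{inten}. Your detour through ${\rm ev}_0^*\omega+{\rm d}\lambda$ also works, but it costs you an extra gluing step (you must recognize ${\rm G}_0\cup\Gamma\vert_t$ as a spanning surface for $\Gamma^t$ and invoke \eqref{inten} a second time); the direct substitution is cleaner. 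Your observation that the integer $n_t$ must be checked to be constant in $t$ is a point the paper glosses over, and is worth keeping.

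For case (1), however, your primary route has a genuine gap. You assert that the holonomy is $\exp\bigl(\frac{\sqrt{-1}}{\hbar}\oint_\Gamma\beta\bigr)$ where $\beta$ is \emph{a} potential for $\widetilde\omega$. Holonomy around a loop is computed from the connection $1$-form itself, not from an arbitrary primitive of its curvature: two primitives differ by a closed $1$-form whose period around $\Gamma$ need not vanish, so this substitution is exactly the assertion to be proved, not a step in its proof. The actual local connection form of $\widetilde\nabla$ is $-\sqrt{-1}\hbar^{-1}({\rm ev}_0^*\theta+\lambda)$, and the paper's proof is the direct computation you relegate to your final sentence: the ${\rm ev}_0^*\nabla$ piece contributes $\oint_{\Gamma^0}\theta$ (the line integral of $\theta$ around the loop traced by the initial points $\gamma(0)$), while \eqref{lambdasimple} together with Stokes' theorem on $\Gamma\vert_t$ gives $\int_\Gamma\lambda=-\oint_{\Gamma^0}\theta+\int_0^1 dt\oint_{\Gamma^t}\theta$; the two $\oint_{\Gamma^0}\theta$ terms cancel, yielding \eqref{holopath}. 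That cancellation is the content of part (1), and it is invisible in the $\beta$-based formulation. So: promote your closing remark to the actual proof of case (1), and drop the appeal to $\beta$.
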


\begin{proof}
\mbox{}
\begin{enumerate}
\item{
Assume that $\Gamma$ is contained within a single domain of $\theta$. Note by Stokes' theorem (see Figure 1),
$$\int_{\Gamma\vert_t}\omega\,=\,-\oint_{\Gamma^0}\theta+\oint_{\Gamma^t}
\theta\, .$$
Thus \eqref{lambdasimple} implies
\begin{equation}\label{lamdaloop}
\int_{\Gamma}\lambda=-\oint_{\Gamma^0}\theta+\int_0^1dt\oint_{\Gamma^t}\theta.
\end{equation}
On the other hand the contribution due to the ${\rm ev}_0^* \nabla$ part is 
\begin{equation} \label{pullnablahol}
\begin{split}
&\frac{{\sqrt{-1}}}{\hbar}\int_0^1\bigl({\rm ev}_0^*\theta\bigr)(\Gamma^{\prime}(s))ds\\
=&\frac{{\sqrt{-1}}}{\hbar}\int_0^1\theta(\Gamma^{\prime}(s,0))ds\\
=&\frac{{\sqrt{-1}}}{\hbar}\int_0^1\theta(\Gamma^{\prime 0}(s))ds=\frac{\sqrt{-1}}{\hbar}\oint_{\Gamma^0}\theta.
\end{split}
\end{equation}
Hence adding \eqref{lamdaloop} and \eqref{pullnablahol} on the exponent, we have 
$${\rm exp}\left(\frac{{\sqrt{-1}}}{\hbar}\int_0^1 dt \oint_{\Gamma^t}\theta\right)$$ 
for the holonomy of ${\widetilde \nabla}$ along $\Gamma$. 
}
\item{
Now consider the general case where $\Gamma$ is not contained within a single domain of $\theta.$ 
Suppose that $\rm G$ is a surface on $\mathcal P M$ which spans the loop $\Gamma$ on $\mathcal P M,$ 
as described in \eqref{surfacepm} and \eqref{surpmtom}. We have already established in 
Proposition~\ref{p:tildenabla} that the curvature of the connection $\widetilde \nabla$ 
is $\hbar^{-1}\widetilde \omega.$ The holonomy around $\Gamma$ for the connection
$\widetilde \nabla$ is
$${\rm exp}\left(\sqrt{-1}{\hbar}^{-1}\iint\widetilde \omega(\partial_{\sigma}{\rm G}, \partial_{\epsilon}{\rm G})d\sigma d\epsilon\right),$$
where $\partial_{\sigma}{\rm G}:=\frac {\partial {\rm G}(\sigma, \epsilon)}{\partial \sigma}$ and $\partial_{\epsilon}{\rm G}:=\frac {\partial {\rm G}(\sigma, \epsilon)}{\partial \epsilon}.$
Using definition of $\widetilde \omega$ in \eqref{sympath}, the expression in the exponent can be written as:
\begin{equation}\label{curvint}
\begin{split}
&\sqrt{-1}{\hbar}^{-1}\int_0^1 dt \left(\iint\omega((\partial_{\sigma}{\rm G})(t), (\partial_{\epsilon}{\rm G})(t))d\sigma d\epsilon\right)\\
=&\sqrt{-1}{\hbar}^{-1}\int_0^1dt \left(\iint\omega(\partial_{\sigma}{\rm G}_t, \partial_{\epsilon}{\rm G}_t)d\sigma d\epsilon\right)\, ;
\end{split}
\end{equation}
in the last line we have used \eqref{surpmtom}. But, by construction $\rm G$ spans the loop $\Gamma$ on $\mathcal P M$, hence each surface ${\rm G}_t$ on $M$ spans the 
loop $\Gamma^t$ on $M$. So, we have as a consequence of \eqref{inten}
\begin{equation}
\iint\omega(\partial_{\sigma}{\rm G}_t, \partial_{\epsilon}{\rm G}_t)d\sigma d\epsilon=2\pi n \hbar+\int_{\Sigma_t}\omega\, ,
\end{equation}
where $\Sigma_t$ is an arbitrary orientable surface on $M$ spanning $\Gamma^t$ and $n\in {\mathbb Z}$.
Therefore, from \eqref{curvint} we conclude the required expression
\begin{equation}\nonumber
{\rm exp}\left(\frac{{\sqrt{-1}}}{\hbar}\int_0^1 dt (\int_{{\Sigma_t}}\omega)\right).
\end{equation}
}
\end{enumerate}
This completes the proof.
\end{proof}

Note again, that particular choice of the interval $[0, 1]\subset \mathbb R$ to define $\mathcal P M$ is not significant here. It is apparent, from 
\eqref{b-a}, that any other choice of an interval $[a, b]\subset \mathbb R$ would be as good and in that case \eqref{holopath} would have been 
${\rm exp}\left(\frac{{\sqrt{-1}}}{\hbar}\int_a^b dt \int_{\Sigma_t}\omega\right).$

{\vskip 0.3 cm}
Let the symplectic manifold $M$ be of dimension $2m$. Assume that $\omega$ satisfies 
the earlier mentioned integrality condition. Then there exists a Hermitian line bundle
$(L\, , H)$ on $M$ equipped with a Hermitian connection with curvature
${\hbar}^{-1}\omega$. Following \cite{Wo}
we call it a \textit {pre-quantum bundle}. Let ${\mathbf S}$ be the space of
square integrable sections $s:M\longrightarrow L$ of the line bundle $L$, with
the inner product given by
$$
\langle{s}_1, {s}_2\rangle \,:=\, \int_M \langle s_1, s_2\rangle_H \omega^m\, ,
$$
where $s_1, s_2\in {\mathbf S}$, $\omega^m$ is the volume form on $M$ and  
$\langle, \rangle_{H}$ is the inner product with respect to the Hermitian
metric $H$. The vector space ${\mathbf S}$ equipped with the above
inner product is a Hilbert 
space. Now, given a section ${\mathbf S}\ni s:M\longrightarrow L$, we have a 
natural pull-back section ${\rm ev}_0^{*}s:={\widetilde s}$ on the 
pull-back line bundle ${\rm ev}_0^* L\,\longrightarrow\, {\mathcal P}M$:
$${\widetilde s}\,:\, {\mathcal P}M\,\longrightarrow \,
{\rm ev}_0^* L\, .$$
Let $\widetilde {\mathbf S}$ be the space of all such sections:
$$
\widetilde {\mathbf S}\,:=\, \{{\rm ev}_0^{*}s\,\mid\, s\in {\mathbf S}\}\, .
$$
The inner product in ${\mathbf S}$ naturally induces
a inner product in  $\widetilde {\mathbf S}$ given by
\begin{equation}\label{innerpath}
\langle\widetilde{s}_1, \widetilde{s}_2\rangle\,:=\,
\int_M \langle s_1, s_2\rangle_H \omega^m\, ,
\end{equation}
where $\widetilde {\mathbf S}\ni \widetilde{s}_1= {\rm ev}_0^{*}s_1$ and 
$\widetilde {\mathbf S}\ni \widetilde{s}_2= {\rm ev}_0^{*}s_2$, $s_1, s_2\in {\mathbf S}$.
Thus $\widetilde {\mathbf S}$ defines a Hilbert space. 

Let us recall the space of functions $\Omega^{0} {\mathcal P}{M}$ on path space  
defined in \eqref{uptok}. In particular, $\Omega_1^{0} {\mathcal P}{M}\subset \Omega^{0} {\mathcal P}{M}$
is the space of $C^{\infty}$ functions on $\mathcal P M$ which can be written as:
$$
\widetilde{f} (\gamma) \,=\, \int_{0}^{1} f (\gamma(t)) dt
$$
for all $\gamma \,\in\, \mathcal P M,$ where $f\,\in\, C^{\infty} (M)$. 

Let us associate an operator ${\phi}_{\rm op}$ with each 
${\phi}\in \Omega^{0} {\mathcal P}{M}$, given by
$$
{\phi}_{\rm op}\,:=\,-\sqrt{-1}\hbar{\widetilde \nabla}_{{\widetilde X}_{\phi}}+{\phi}\, ,
$$
where ${\widetilde \nabla}$ is the connection on the line bundle
${\rm ev}_0^* L\longrightarrow {\mathcal P}M$
defined in Proposition~\ref{p:tildenabla} and ${\widetilde X}_{\phi}\in T\mathcal P M$
is the Hamiltonian vector field
corresponding to ${\phi}$ defined in \eqref{hamilpm}. Let ${\mathbb F}$ be the space of all such 
operators obtained from the functions in $\Omega^{0} {\mathcal P}{M}$. It is a
straightforward verification that
$$
[\phi_{1\,{\rm op}}, \phi_{2 \, {\rm op}}]\,=\,
-{\sqrt{-1}}\hbar\{\phi_1, \phi_2\}_{\rm op}\, ,
$$
where $\phi_1, \phi_2\in \Omega^{0}\mathcal P M$ and $\{\phi_1, \phi_2\}$ is as defined
in \eqref{pathpoiss}. We observe that
${\mathbb F}$ has the following action on the Hilbert space ${\widetilde {\mathbf S}}$
$$
{\phi}_{\rm op}{\widetilde s}\,:=\,
\bigl(-\sqrt{-1}\hbar{\widetilde \nabla}_{{\widetilde X}_{\phi}}+{\phi}\bigr){\widetilde s},
$$
where ${\widetilde s}\in {\widetilde {\mathbf S}}$ and ${\phi}_{\rm op}\in {\mathbb F}$. Note that by
construction,
any ${\widetilde s}\in {\widetilde {\mathbf S}}$ is pull-back of some $s\in {\mathbf S}$.

Let ${\widetilde f}\in \Omega_1^{0} {\mathcal P}{M}\subset \Omega^{0} {\mathcal P}{M}$ be given by 
$${\widetilde f}(\gamma)\,=\,\int_{0}^{1} f (\gamma(t)) dt\, ,\,\ f\,\in\, C^{\infty} (M)\, .$$
Then the corresponding operator
 ${\widetilde f}_{\rm op}\in {\mathbb F}$ is given by 
$$
{\widetilde f}_{\rm op}\,:=\,-\sqrt{-1}\hbar{\widetilde \nabla}_{{\widetilde X}_{\widetilde f}}+
{\widetilde f}\, .
$$
Recall by Proposition~\ref{p:identify},
\begin{equation}\nonumber
\bigl({{\widetilde X}_{\widetilde f}}(\gamma)\bigr)(t)=X_f(\gamma(t)).
\end{equation}
Let ${\widetilde s}={\rm ev}_0^*s.$ Then using definition of ${\widetilde \nabla}$ in Proposition~\ref{p:tildenabla} we obtain the following 
expression for $\bigl({\widetilde f}_{\rm op}{\widetilde s}\bigr)$
\begin{eqnarray}
\bigl({\widetilde f}_{\rm op}{\widetilde s}\bigr)(\gamma)&=
&-\sqrt{-1}\hbar\bigl({\nabla_{X_f}s}\bigr)(\gamma(0))\nonumber\\
&&-\left(\int_0^1 \left(\int_0^t \omega(\stackrel{.}{\gamma}(\tau), {{ X}_{f}}(\gamma(\tau)) d \tau\right)dt\right)s(\gamma(0))\nonumber\\
&&+\left(\int_0^{1}f\bigl(\gamma(t)\bigr)dt \right)s(\gamma(0)).\nonumber
\end{eqnarray}
 
Now, suppose ${\zeta}\,:=\,{\widetilde f}{\widetilde g}\,\in\,
\Omega^{0} {\mathcal P}{M}$, where
${\widetilde f}, {\widetilde g}\in \Omega_1^{0} {\mathcal P}{M}$ are given by
$$
{\widetilde f}(\gamma)=\int_{0}^{1} f (\gamma(t)) dt\, ,\
{\widetilde g}(\gamma)=\int_{0}^{1} f (\gamma(t)) dt\, ,
$$
where $f\, ,g\,\in\, C^{\infty} (M)$.
Using \eqref{hamilsum} and the identity 
$${\widetilde \nabla}_{{\widetilde X}_{\zeta}}\,=\,
{\widetilde g}{\widetilde \nabla}_{{\widetilde X}_{\widetilde f}}+{\widetilde f}
{\widetilde \nabla}_{{\widetilde X}_{\widetilde g}}\, ,$$
it is straightforward to show that:
 \begin{eqnarray}
&&\bigl({\zeta}_{\rm op}{\widetilde s}\bigr)(\gamma)=\nonumber\\
&&-\sqrt{-1}\hbar\bigl(\int_{0}^{1} g (\gamma(t)) dt\bigr)\bigl({\nabla_{X_f}s}\bigr)(\gamma(0))\nonumber\\
&-&\sqrt{-1}\hbar\bigl(\int_{0}^{1} f (\gamma(t)) dt\bigr)\bigl({\nabla_{X_g}s}\bigr)(\gamma(0))\nonumber\\
&-&\bigl(\int_{0}^{1} g (\gamma(t)) dt\bigr)\left(\int_0^1 \left(\int_0^t \omega(\stackrel{.}{\gamma}(\tau), {{X}_{f}}(\gamma(\tau)) d \tau\right)dt\right)s(\gamma(0))\nonumber\\
&-&\bigl(\int_{0}^{1} f (\gamma(t)) dt\bigr)\left(\int_0^1 \left(\int_0^t \omega(\stackrel{.}{\gamma}(\tau), {{X}_{g}}(\gamma(\tau)) d \tau\right)dt\right)s(\gamma(0))\nonumber\\
&&+\left(\int_0^{1}f\bigl(\gamma(t)\bigr)dt \right)\left(\int_{0}^{1} g (\gamma(t)) dt\right)s(\gamma(0)).\label{explictactzeta}
\end{eqnarray}
Thus \eqref{explictactzeta} generates the action of entire ${\mathbb F}$ on $\widetilde {\mathbf S}$.

\section{symplectic form on path space over the space of solutions}\label{s:examp}

Let $M$ be the space-time manifold of dimension $n+1$, where $n$ is the number of space directions, 
equipped with a pseudo-Riemannian metric $g$. Coordinate system on $M$ will be denoted as 
$$x^\mu, \quad \mu=0, 1,\cdots , n, $$
where $x^0$ is the time direction. Let ${\overline M}:=M \times S^1$ be the manifold with an extra spatial
dimension $S^1.$ Let $S^1$ be parametrized by an angle $\theta \in [0, 2\pi]$ and has a radius $R.$ Thus a coordinate
system ${\overline x}^A, A=0, 1,\cdots , n+1$, in ${\overline M}$ is given by
\begin{equation}\label{extraspatcoord}
\begin{split}
&{\overline x}^{\mu}=x^\mu, \quad \mu= 0, 1,\cdots , n,\\
&{\overline x}^{n+1}=\theta.
\end{split}
\end{equation}
The basic object of interest in this section will be collections of fields on $M$ and ${\overline M},$ which satisfy certain 
equation on $M$ and ${\overline M}$ respectively. Here we will only consider Klein-Gordon equation on $M$ and ${\overline M}.$
However, the methodology we employ is general enough
to work for other cases.

Let $\nabla$ be a connection on $M$, and let $\nabla_{\mu}:=\nabla_{\frac{\partial}{\partial x_\mu}}$
denote the covariant derivative along 
${\frac{\partial}{\partial x_\mu}}$. 
Let $g_{\mu \, \nu}\,:=\, g({\frac{\partial}{\partial x_\mu}}, {\frac{\partial}{\partial x_\nu}})$
be the components of metric $g$ on $M$. Components of the 
inverse of metric $g$ will be denoted as $\{g^{\mu \, \nu}\}$. Metric $g$
induces a metric ${\overline g}$ on ${\overline M}\,=\,M\times S^1$, which in
components $\{{{\overline g}_{A\, B}}\}, A, B=0,\cdots , n+1$ is given by:
\begin{equation}\label{gab}
\begin{split}
&{\overline g}_{\mu \, \nu}({\overline x}):=g_{\mu \, \nu}(x), \quad \mu, \nu=0,\cdots , n\\
&{\overline g}_{\mu \, n+1}({\overline x}):=0, \quad \mu=0,\cdots , n\\
&{\overline g}_{n+1\, n+1}({\overline x}):=-R^2,
\end{split}
\end{equation}
where $R$ as before
is the radius of the circle $S^1.$ Similar or somewhat more general type of metrics have already been used
in studying the graviton-scalar sector for the Kaluza-Klein theory ~\cite{ap-ch, ba-love, Fr}.   
 Let ${\overline \nabla}$ be a connection
on ${\overline M}$ such that 
$${\overline \nabla}_\mu=\nabla_{\mu}, \quad \mu= 0, 1,\cdots , n,$$
where ${\overline \nabla}_\mu={\overline \nabla}_{\frac{\partial}{\partial {\overline x}_\mu}}.$
Note that $\overline \nabla$ and $\nabla$ can be chosen to be the metric connections 
corresponding to ${\overline g}$ on $\overline M$ and $g$ on $M$ respectively.
 
Now Consider the Klein-Gordon equation on $M$:
\begin{equation}\label{KGM}
{\Box \psi} + \rho^2\psi\,=\,0\, ,
\end{equation}
where $\rho$ is a real constant, and $\Box\,:=\,-g^{\mu \nu}\nabla_{\mu}\nabla_{\nu}$. Let ${\mathcal M}$
be the space of  solutions of \eqref{KGM}:
\begin{equation}\label{solspacem}
{\mathcal M}\,:=\, \{\psi: M\longrightarrow {\mathbb R}\,\mid\, {\hskip 0.1 cm} {\Box \psi} + \rho^2\psi=0\}.
\end{equation}
This ${\mathcal M}$ has a differential structure and since ${\mathcal M}$ is a vector space, 
tangent vector space $T_{\psi}{\mathcal M}$ at any $\psi\in {\mathcal M}$ can be identified
with ${\mathcal M}$. Also, ${\mathcal M}$ has a symplectic structure
\cite[Chapter 7]{Wo}; this symplectic form $\omega$ on ${\mathcal M}$ 
is
\begin{equation}\label{symsolm}
\omega(\psi_1, \psi_2)\,:=\,
\frac{1}{2}\int_{\Sigma}\bigl(\psi_2\nabla_{\mu}\psi_1-\psi_1\nabla_{\mu}\psi_2\bigr){\rm n}^{\mu}d\sigma
\, ,
\end{equation}
where $\psi_1, \psi_2\in {\mathcal M}$ and $\Sigma \subset M$ is a Cauchy surface with volume element $d\sigma$ and unit normal vector
$\{{\rm n}^\mu\}.$ It can be shown that the right-had side of \eqref{symsolm} is actually independent of choice of the Cauchy surface $\Sigma$
\cite[Chapter 7.2]{Wo}.

Let ${\mathcal P}{\mathcal M}$ denote the path space, as defined in Section~\ref{s:poiss},
for ${\mathcal M}$. For $\gamma_1, \gamma_2\in {\mathcal P}{\mathcal M}$, we have the
pointwise addition $(\gamma_1+\gamma_2)(t):=\gamma_1(t)+\gamma_2(t)$, 
$t\in [0, 1]$. Therefore, ${\mathcal P}{\mathcal M}$ is also a vector space.

Let $\overline {\mathcal M}$ be the space of solutions of the Klein-Gordon
equation on ${\overline M}\,=\,M\times S^1$:
\begin{equation}\label{KGbarM}
{{\overline \Box} \Psi} + \rho^2\Psi\,=\,0\, ,
\end{equation}
where ${\overline \Box}\,:=\,-g^{A \, B}\nabla_{A}\nabla_{B}$, and $g^{A \, B}$ are the components
of inverse of $\overline g$ defined in \eqref{extraspatcoord}.
Thus
\begin{equation}\label{solspacebarm}
{\overline {\mathcal M}}:=\{\Psi: {\overline M}\longrightarrow {\mathbb R}\,
\mid\, {\hskip 0.1 cm} {{\overline \Box} \Psi} + \rho^2\Psi=0\}.
\end{equation}

\begin{proposition}\label{p:barmpm}
Let ${{\mathcal M}}$ and ${\overline {\mathcal M}}$ are as defined in \eqref{solspacem} and \eqref{solspacebarm} respectively. Then
${\overline {\mathcal M}}$ can be identified with a subspace of the path space  ${\mathcal P}{\mathcal M}$
 over ${{\mathcal M}}$.
\end{proposition}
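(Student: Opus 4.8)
The plan is to embed $\overline{\mathcal M}$ into $\mathcal P\mathcal M$ by slicing a solution on $\overline M=M\times S^1$ along the circle factor. Since elements of $\mathcal P\mathcal M$ are maps out of $[0,1]$ while $S^1$ carries the angular coordinate $\theta\in[0,2\pi]$, I would first reparametrize the circle by $t=\theta/2\pi$ and, to each $\Psi\in\overline{\mathcal M}$, attempt to associate the family of restrictions $t\mapsto \Psi(\,\cdot\,,2\pi t)$. The periodicity $\Psi(\,\cdot\,,0)=\Psi(\,\cdot\,,2\pi)$ makes this family a loop, so the construction will in fact identify $\overline{\mathcal M}$ with a subspace of the loop space sitting inside $\mathcal P\mathcal M$, and linearity of the assignment is immediate from the pointwise vector-space structures on $\overline{\mathcal M}$ and on $\mathcal P\mathcal M$.

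The point that needs care — and the one I expect to be the main obstacle — is that this assignment must take values in $\mathcal M$, i.e. each slice must solve the Klein--Gordon equation \eqref{KGM} on $M$. Because $\overline g$ is the product metric \eqref{gab} and $\overline\nabla$ the associated metric connection, the mixed Christoffel symbols vanish and the operator splits as
\[
\overline\Box\,=\,-\overline g^{AB}\nabla_A\nabla_B\,=\,\Box+\frac{1}{R^{2}}\,\partial_\theta^{2}\, ,
\]
using $\overline g^{\mu\nu}=g^{\mu\nu}$ and $\overline g^{\,n+1,n+1}=-R^{-2}$. Hence \eqref{KGbarM} reads $\Box\Psi+\rho^2\Psi=-R^{-2}\partial_\theta^2\Psi$, and the naive slice $\Psi(\,\cdot\,,2\pi t)$ fails to lie in $\mathcal M$ precisely because of the transverse Kaluza--Klein term $R^{-2}\partial_\theta^2\Psi$ (equivalently, in the Fourier expansion $\Psi=\sum_k\psi_k\,e^{\sqrt{-1}k\theta}$ the mode $\psi_k$ solves the shifted equation $\Box\psi_k+(\rho^2-k^2R^{-2})\psi_k=0$). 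The clean way around this is to fix a Cauchy surface $\Sigma\subset M$ and define $\bigl(\iota(\Psi)\bigr)(t)\in\mathcal M$ to be the unique solution of \eqref{KGM} whose Cauchy data on $\Sigma$ agree with those of $\Psi(\,\cdot\,,2\pi t)$; existence and uniqueness come from well-posedness of the Klein--Gordon Cauchy problem on $M$. This correction is harmless for what follows, since the symplectic pairing \eqref{symsolm} depends only on Cauchy data along $\Sigma$, which is exactly why the resulting identification will make the two symplectic forms agree in Proposition~\ref{p:coinsym}.

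Finally I would verify injectivity. If $\iota(\Psi)=0$ then, for every $t$, the Cauchy data of $\Psi(\,\cdot\,,2\pi t)$ on $\Sigma$ vanish, so the full Cauchy data of $\Psi$ along $\Sigma\times S^1$ vanish; since $S^1$ is spacelike and compact, $\Sigma\times S^1$ is a Cauchy surface for $\overline M$, and uniqueness for \eqref{KGbarM} forces $\Psi=0$. Together with linearity this exhibits $\overline{\mathcal M}$ as a vector subspace of $\mathcal P\mathcal M$ via $\iota$, as claimed. The only genuinely delicate inputs are the well-posedness statements on $M$ and $\overline M$ and the verification that $\Sigma\times S^1$ is Cauchy; modulo these, the identification is simply the slicing map corrected so as to land in $\mathcal M$.
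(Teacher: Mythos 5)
Your proof arrives at the same identification-by-slicing that the paper uses, but it diverges on the one point the paper treats as immediate, and you are right to diverge there. The paper's entire proof is the opening ``observation'' that for a solution $\Psi$ of \eqref{KGbarM} each slice $\Psi(\cdot\, ,\theta')$ solves \eqref{KGM}; granting that, the map $t\mapsto\Psi(\cdot\, ,2\pi t)$ lands in $\mathcal M$ and the proposition follows with no further work. Your computation $\overline\Box=\Box+R^{-2}\partial_\theta^2$ is correct for the product metric \eqref{gab} (all mixed Christoffel symbols vanish), and it shows that the paper's observation fails whenever $\Psi$ genuinely depends on $\theta$: the Fourier mode $\psi_k$ solves the Klein--Gordon equation with shifted mass-squared $\rho^2-k^2R^{-2}$, so a slice is a superposition of solutions of \emph{different} equations and in general solves none of them. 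As written, the paper's argument therefore covers only the $\theta$-independent solutions. Your repair --- sending $t$ to the unique element of $\mathcal M$ whose Cauchy data on $\Sigma$ agree with those of $\Psi(\cdot\, ,2\pi t)$ --- is a genuine fix: it is linear; it is injective because $\Sigma\times S^1$ is a Cauchy surface for $\overline M$ (a fact the paper itself invokes just before \eqref{final}); and it remains compatible with Proposition~\ref{p:coinsym}, since the pairing \eqref{symsolm} and the integrand in \eqref{ccordintrans} see only the value and normal derivative on $\Sigma$, which your map preserves by construction. The one point worth making explicit is that with your corrected map the identity $\gamma(\theta/2\pi)(x)=\Psi(x,\theta)$ used in deriving \eqref{ccordintrans} holds only on $\Sigma$ rather than on all of $M$; that is sufficient for the computation there, but it should be stated.
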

\begin{proof}
We first make the following observation. Let  $\Psi({\overline x}^A)=\Psi(x^\mu,\theta)$ be a solution of \eqref{KGbarM}. 
Then for a fixed $\theta' \in [0, 2\pi]$, the function $\Psi(x^\mu,\theta')$ is a solution of
\eqref{KGM}. Then given any $\Psi\in {\overline {\mathcal M}}$
we define for each $\theta \in [0, 2\pi]$
\begin{equation}\label{pmbarm}
\Psi(x^\mu, \theta):=\left({\overline \Psi}(\frac {\theta}{2\pi})\right)(x^\mu).
\end{equation}
Hence, for every $\theta\in [0, 2\pi]$, ${\overline \Psi}(\frac {\theta}{2\pi})$ is in ${\mathcal M}$,
in other words, 
$${\overline \Psi}:[0, 1]\longrightarrow {\mathcal M} \Rightarrow {\overline \Psi}\in {\mathcal P}{\mathcal M}.$$
On the other hand, for each $t\in [0, 1]$, we define $\theta:=2\pi t$, and for a
$\gamma'\in {\mathcal P}{\mathcal M}$,
\begin{equation}\label{barmpm}
(\gamma'(t))(x^\mu)\,:=\,\Psi' (x^\mu, \theta)\, .
\end{equation}
If $\Psi'(x^\mu, \theta)$ is a solution of \eqref{KGbarM} then we identify $\gamma'$ with $\Psi'\in {{\mathcal M}}.$
Hence by \eqref{pmbarm} and \eqref{barmpm} we identify ${\overline {\mathcal M}}$ with
a subspace of ${\mathcal P}{\mathcal M}$.
\end{proof}

Following \eqref{sympath} we define the symplectic form on ${\mathcal P}{\mathcal M}$ as
\begin{equation}\label{sympathsol}
\widetilde \omega (\gamma_1, \gamma_2)\,:=\,2\pi R\int_{0}^{1} \omega \bigl(\gamma_1(t), \gamma_2(t)\bigr)dt\, ,
\end{equation}
where $\gamma_1, \gamma_2\in {\mathcal P}{\mathcal M}$, and $\omega$ is the symplectic form on ${\mathcal M}$ defined in \eqref{symsolm}.
Next we show that $\widetilde \omega$ restricted to ${\overline {\mathcal M}}\subset {\mathcal P}{\mathcal M}$ coincides with the symplectic form 
$\overline \omega$ obtained directly from the solution space ${\overline {\mathcal M}}$. Thus  $\overline \omega$ is given by 
\begin{equation}\label{symbarm}
{\overline \omega}(\Psi_1, \Psi_2)\,:=\,
\frac{1}{2}\int_{\overline \Sigma}\bigl(\Psi_2{\overline \nabla}_{A}\Psi_1-
\Psi_1{\overline \nabla}_{A}\Psi_2\bigr){\overline{\rm n}}^{A}d{\overline \sigma}\, ,
\end{equation} 
where $\Psi_1, \Psi_2\in {\overline {\mathcal M}}$, ${\overline \Sigma} \subset {\overline M}$ is a Cauchy surface with volume element $d{\overline\sigma}$ and unit normal vector $\{{\overline{\rm n}}^A\}.$ Again, $\overline \omega$ does not depend on the choice of the Cauchy surface ${\overline \Sigma} \subset {\overline M}.$
Putting \eqref{symsolm} in \eqref{sympathsol} we obtain
\begin{equation}\label{sympathsolpmexpli}
\widetilde \omega (\gamma_1, \gamma_2)\,=\,2\pi R\int_{0}^{1} Z(t)dt\, ,
\end{equation}
where
$$
Z(t)\,=\,
\frac{1}{2}\int_{\Sigma}\bigl((\gamma_2(t))(x)(\nabla_{\mu}\gamma_1(t))(x)-
(\gamma_1(t))(x)(\nabla_{\mu}\gamma_2(t))(x)\bigr){\rm n}^{\mu}d\sigma\, ,
$$
$\Sigma \subset M$ is a Cauchy surface with volume element $d\sigma$ and unit normal vector
$\{{\rm n}^\mu\}.$ By change of variable $t\,=\,
\frac{\theta}{2\pi}$, the right--hand side can be written as
\begin{equation}\nonumber
\begin{split}
&\widetilde \omega (\gamma_1, \gamma_2)\,= \, R\int_{0}^{2\pi} Z'(\theta)d\theta,\\
&{\rm where}\\
&Z'(\theta)=\\
&\frac{1}{2}\int_{\Sigma}\left(\bigl(\gamma_2(\frac{\theta}{2\pi})\bigr)(x)\bigl(\nabla_{\mu}\gamma_1(\frac{\theta}{2\pi})\bigr)(x)-\bigl(\gamma_1(\frac{\theta}{2\pi})\bigr)(x)\bigl(\nabla_{\mu}\gamma_2(\frac{\theta}{2\pi})\bigr)(x)\right){\rm n}^{\mu}d\sigma.
\end{split}
\end{equation}
If $\gamma_1,\gamma_2 \in {\overline {\mathcal M}}\subset {\mathcal P}{\mathcal M}$, then by Proposition~\ref{p:barmpm}, we have 
$\gamma_1(\frac {\theta}{2\pi})(x)=\Psi_1(x,\theta)$ and $\gamma_2(\frac {\theta}{2\pi})(x)=\Psi_2(x,\theta)$, where $\Psi_1, \Psi_2\in {\overline {\mathcal M}}.$
Hence  
\begin{equation}\label{ccordintrans}
\begin{split}
&\widetilde \omega (\Psi_1, \Psi_2)\,=\\
&R\int_{0}^{2\pi} \left(\frac{1}{2}\int_{\Sigma}\bigl(\Psi_2(x, \theta)\nabla_{\mu}\Psi_1(x, \theta)-\Psi_1(x, \theta)\nabla_{\mu}\Psi_2(x,\theta)\bigr){\rm n}^{\mu}d\sigma\right)d\theta.
\end{split}
\end{equation}
Observe that if $\Sigma \subset M$ is a Cauchy surface with volume element $d\sigma$ and unit normal vector
$\{{\rm n}^\mu\},$ then ${\overline \Sigma}':=\Sigma \times S^1 \subset {\overline M}=M\times S^1$ is a Cauchy surface of ${\overline M}=M\times S^1.$
 Volume form on ${\overline \Sigma}':=\Sigma \times S^1$ is $d{\overline\sigma}'\,
=\,R d\sigma d\theta$ and the unit normal vector is given by $\{{\overline{\rm n}^{'\, A}}\}=\{{\rm n}^\mu, 0\}.$
Thus \eqref{ccordintrans} can be written as:
\begin{equation}\label{final}
\begin{split}
&\widetilde \omega (\Psi_1, \Psi_2)\,=\\
&\frac{1}{2}\int_{{\overline \Sigma}'}\bigl(\Psi_2(x, \theta){\overline \nabla}_{A}\Psi_1(x, \theta)-\Psi_1(x, \theta){\overline \nabla}_{A}\Psi_2(x,\theta)\bigr){\overline{\rm n}}^{' \, A}d{\overline\sigma}'\\
=&\frac{1}{2}\int_{{\overline \Sigma}'}\bigl(\Psi_2(\overline x){\overline \nabla}_{A}\Psi_1(\overline{x})-
\Psi_1(\overline{x}){\overline \nabla}_{A}\Psi_2(\overline{x})\bigr){\overline{\rm n}}^{' \, A}d{\overline \sigma}'\, .
\end{split}
\end{equation}
Since $\overline \omega$ in \eqref{symbarm} does not depend on the choice of the Cauchy
surface $\overline \Sigma$, we conclude the following by comparing
the right--hand sides of \eqref{symbarm} and \eqref{final}.

\begin{proposition}\label{p:coinsym}
By Proposition~\ref{p:barmpm} we identify ${\overline {\mathcal M}}$ with a subspace of ${\mathcal P}{\mathcal M}$. Then the restriction of the symplectic form 
${\widetilde \omega}$, defined in \eqref{sympathsol}, to ${\overline {\mathcal M}}\subset {\mathcal P}{\mathcal M}$ coincides with the symplectic form ${\overline \omega}$ given in \eqref{symbarm}. 
\end{proposition}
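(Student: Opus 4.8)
The plan is to assemble the explicit computation carried out in the paragraphs preceding the statement into a clean comparison of the two $2$-forms. First I would start from the definition \eqref{sympathsol} of $\widetilde\omega$ on ${\mathcal P}{\mathcal M}$ and substitute into it the Cauchy-surface expression \eqref{symsolm} for the symplectic form $\omega$ on ${\mathcal M}$; this produces \eqref{sympathsolpmexpli}, in which the integrand $Z(t)$ is itself a Cauchy-surface integral over $\Sigma \subset M$ of the Klein-Gordon current $\tfrac12\bigl(\gamma_2\nabla_\mu\gamma_1 - \gamma_1\nabla_\mu\gamma_2\bigr){\rm n}^{\mu}$ evaluated at the field $\gamma_i(t)$.

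Next I would perform the change of variable $t = \theta/2\pi$, so that $dt = d\theta/2\pi$ and the prefactor $2\pi R$ collapses to $R$, turning the $t$-integral over $[0,1]$ into a $\theta$-integral over $[0,2\pi]$. At this point I invoke Proposition~\ref{p:barmpm}, which supplies the identification $\gamma_i(\theta/2\pi)(x) = \Psi_i(x,\theta)$ for $\gamma_i \in \overline{\mathcal M} \subset {\mathcal P}{\mathcal M}$; substituting this yields the iterated integral \eqref{ccordintrans}, an integral over $\theta\in[0,2\pi]$ of a Cauchy-surface integral over $\Sigma$.

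The key geometric step is then to recognize that $\overline\Sigma' := \Sigma \times S^1$ is a Cauchy surface of $\overline M = M\times S^1$. Here I would use the block form \eqref{gab} of the induced metric $\overline g$: because $\overline g_{\mu,n+1} = 0$ and $\overline g_{n+1,n+1} = -R^2$, the volume element of $\overline\Sigma'$ factorizes as $d\overline\sigma' = R\,d\sigma\,d\theta$, and the unit normal to $\overline\Sigma'$ is $\{\overline{\rm n}'^{\,A}\} = \{{\rm n}^\mu,0\}$, with no component along the $S^1$ direction. Moreover, by the choice $\overline\nabla_\mu = \nabla_\mu$ on the $M$-directions, the current $\tfrac12\bigl(\Psi_2\overline\nabla_A\Psi_1 - \Psi_1\overline\nabla_A\Psi_2\bigr)\overline{\rm n}'^{\,A}$ contracted against this normal reduces exactly to the $M$-current already appearing under the integral. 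Combining these facts rewrites \eqref{ccordintrans} as a single integral over $\overline\Sigma'$, which is precisely \eqref{final}.

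Finally I would compare \eqref{final} with the intrinsic definition \eqref{symbarm} of $\overline\omega$: the two right-hand sides share the same integrand and differ only in that \eqref{final} integrates over the particular Cauchy surface $\overline\Sigma' = \Sigma\times S^1$ while \eqref{symbarm} integrates over an arbitrary Cauchy surface $\overline\Sigma$; since $\overline\omega$ is independent of the choice of Cauchy surface, the two agree, proving the claim. I expect the main obstacle to be the geometric bookkeeping in the third step, namely verifying that $\Sigma\times S^1$ is genuinely a Cauchy surface of $\overline M$ and that both the factorization $d\overline\sigma' = R\,d\sigma\,d\theta$ and the normal $\{{\rm n}^\mu,0\}$ follow correctly from the block-diagonal metric \eqref{gab}, together with the implicit check that the $\overline\nabla_A$-current restricts to the $\nabla_\mu$-current once contracted with a normal having no $S^1$ component.
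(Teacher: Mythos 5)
Your proposal is correct and follows essentially the same route as the paper: the paper's ``proof'' is exactly the chain of displayed computations \eqref{sympathsolpmexpli}--\eqref{final} preceding the statement (substitution of \eqref{symsolm} into \eqref{sympathsol}, the change of variable $t=\theta/2\pi$, the identification from Proposition~\ref{p:barmpm}, the recognition of $\overline\Sigma'=\Sigma\times S^1$ as a Cauchy surface with $d\overline\sigma'=R\,d\sigma\,d\theta$ and normal $\{{\rm n}^\mu,0\}$, and finally the Cauchy-surface independence of $\overline\omega$). You have also correctly isolated the one step the paper asserts without detailed verification, namely the geometric bookkeeping for $\overline\Sigma'$ coming from the block-diagonal metric \eqref{gab}.
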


\section*{Acknowledgments}
Authors thank the anonymous referee for carefully reading the manuscript. Chatterjee acknowledges fellowship from the \textit {Jacques Hadamard Mathematical Foundation}.

\end{document}